\theoremstyle{plain}
\newtheorem{theorem}{Theorem}
\newtheorem{lemma}{Lemma}
\newtheorem{corollary}{Corollary}
\newtheorem{definition}{Definition}
\newtheorem{remark}{Remark}
\newtheorem{assumption}{Assumption}
\newtheorem{example}{Example}
\newcommand{\R}{\mathbb{R}}
\newcommand{\N}{\mathbb{N}}
\newcommand{\Pol}{\R[x]}
\newcommand{\norm}[1]{\lVert #1 \rVert}
\newcommand{\Sig}{\Sigma}              
\newcommand{\Ltwo}[1]{L^2(#1)}
\title{Density, Determinacy, Duality\\ and a Regularized Moment-SOS Hierarchy}
\author{Didier Henrion\footnote{LAAS-CNRS, University of  Toulouse, France and Faculty of Electrical Engineering, Czech Technical University in Prague, Czechia.}}
\date{\today}
\begin{document}
	\maketitle
\begin{abstract}
	The standard moment-sum-of-squares (SOS) hierarchy is a powerful method for solving global polynomial optimization problems. However, its convergence relies on Putinar’s Positivstellensatz, which requires the feasible set to satisfy the algebraic Archimedean property. In this paper, we introduce a regularized moment-SOS hierarchy capable of handling problems on unbounded sets or bounded sets violating the Archimedean property. Adopting a functional analysis viewpoint, we rely on the multivariate Carleman condition for measure determinacy rather than algebraic compactness. We prove that finite degree projections of the quadratic module are dense in the cone of positive polynomials with respect to the square norm induced by the measure. Based on these density results, we prove the convergence of a regularized hierarchy without invoking any Positivstellensatz. Furthermore, we propose a penalized formulation of the hierarchy which, combined with Bernstein-Markov inequalities, provides a monotonically non-decreasing sequence of certified lower bounds on the global minimum. The approach is illustrated on several benchmark problems known to be difficult or ill-posed for the standard hierarchy.
\end{abstract}
	
The \emph{moment-SOS hierarchy} is a powerful approach for solving globally difficult non-linear non-convex problems.
It was designed originally for polynomial optimization \cite{L01}, for recent overviews and more references see e.g. \cite{HKL20,N23,T24}.
Its convergence proof relies on Putinar's Positivstellensatz (Psatz) \cite{P93}, which is a distinguished sum-of-squares (SOS) representation of positive polynomials on semialgebraic sets
of the form
\[
K:=\left\{x \in \R^n : g_j(x) \geq 0, \ j=1,\ldots,m\right\}
\]
where $g_j \in \R[x]$ are given real polynomials of the indeterminate $x \in \R^n$. Let $g_0:=1$ for notational convenience. Convergence of the hierarchy is guaranteed under
a specific algebraic compactness assumption on the data which is called the \emph{Archimedean property}: the quadratic module
\[
Q(g):=\left\{  \sum_{j=0}^m s_j g_j : s_j \in \Sigma[x], \ j=0,1,\ldots,m\right\}
\]
must contain the polynomial \[R^2-x^Tx\] for some number $R$,
where $\Sigma[x]$ is the cone of SOS polynomials, see e.g. \cite{PD01,M08}.
This property implies that $K$ is compact, and this restricts the application of the hierarchy in its native form.
Indeed there are compact semialgebraic sets which violate the Archimedean property, one of the simplest being
\[
\{x \in \R^2 : 2x_1-1 \geq 0,\ 2x_2-1 \geq 0,\ 1-x_1x_2 \geq 0\}
\]
see \cite[Ex. 6.3.1]{PD01} and \cite[Ex 6.5]{N08}.
As surveyed e.g. in \cite{N23}, the compactness assumption can be circumvented by using various reformulation strategies (change of coordinates,
homogenization) so that the problem eventually boils down to a modified POP satisfying the Archimedean property.
The compactness assumption was relaxed to a certain extent by introducing the multivariate Carleman condition in the context of polynomial optimization \cite{L06}. The Carleman condition ensures measure \emph{determinacy}, i.e. the property that a measure is uniquely specified by its moments. This allowed to construct simple high degree perturbation to approximate positive polynomials with SOS \cite{LN07}, and this culminated in
an elegant treatment in \cite{L13} of the problem of moments on unbounded semialgebraic sets, a problem dual to the SOS representation of positive polynomials, see e.g. \cite{S17} for a comprehensive account.

In this paper, we introduce a \emph{regularized} moment-SOS hierarchy that can deal with POPs on unbounded sets, or bounded sets defined by polynomials violating the Archimedean property. The convergence proof of our regularized hierarchy does not rely on Putinar's Psatz. In contrast with \cite{L13} which develops a coefficient-space program in a weighted topology of discrete moment sequences, our treatment follows a function-space viewpoint. Similarly to \cite{L13}, our approach relies  on the Carleman condition. We formulate our results as \emph{density} and separation properties in the topology of the Hilbert space $L^2(\mu)$ where $\mu$ is a measure satisfying the Carleman condition.

Our contributions are as follows. After the preliminary Sections \ref{sec:carleman} and \ref{sec:polydensity},
we provide in Section \ref{sec:sosdensity} a direct functional analytic proof that SOS polynomials are uniformly dense in positive polynomials. Without assuming the Archimedean property of $Q(g)$ or the compactness of $K$, we prove in Section \ref{sec:qmdensity} that finite degree projections of the quadratic module $Q(g)$ are uniformly dense in finite degree positive polynomials. In Section \ref{sec:regmomsos} we introduce a regularized version of the moment-SOS hierarchy and prove its convergence with our density results. In particular our convergence proof does not rely on any Psatz. We also describe an alternative penalized version of the regularized moment-SOS hierarchy that provides a monotonically non-decreasing sequence of lower bounds on the global minimum on compact sets. We illustrate our regularized moment-SOS hierarchy on various degenerate benchmark POPs in Section \ref{sec:examples}.

\section{Carleman Determinacy}\label{sec:carleman}

In the following, $\mu$ is a finite positive Borel measure.  The Lebesgue space $L^1(\mu)$ consists of measurable integrable functions $f : \R^n \to \R$.
	This is a Banach space with the norm
	\[
	\|f\|_{L^1(\mu)}\ :=\ \int_{\R^n} |f(x)|\,d\mu(x).
	\]
The Lebesgue space $L^2(\mu)$ consists of measurable square integrable functions $f : \R^n \to \R$.
	This is a Hilbert space with inner product
	$$\langle f,g\rangle_{L^2(\mu)}:=\int f(x)g(x)\,d\mu(x)$$ and norm $$\|f\|_{L^2(\mu)}:=\big(\int f^2(x)\,d\mu(x)\big)^{1/2}.$$
 Since $\mu$ is finite, $L^2(\mu)\subset L^1(\mu)$ and $\|f\|_{L^1(\mu)}\le \mu(\R^n)^{1/2}\|f\|_{L^2(\mu)}$.

\begin{definition}
Given an integer vector $\alpha \in \N^n$, the \emph{moment} of order $\alpha$ of the measure $\mu$ is the real number
\[
y_{\alpha} = \int x^\alpha d\mu(x) := \int x^{\alpha_1}_1 x^{\alpha_2}_2 \cdots x^{\alpha_n}_n d\mu(x_1,x_2,\ldots,x_n).
\]
The moments of $\mu$ are the infinite countable sequence $(y_\alpha)_{\alpha \in \N^n} \subset \R$.
\end{definition} 

\begin{definition}
A measure with given moments is called \emph{determinate} if it is the unique measure with these moments.
\end{definition}
	
\begin{definition}\label{def:marginals}
For $i=1,\ldots,n$, define the even marginal moments
	\[
	y_{i,2k}\ :=\  \int_{\R^n} x_i^{2k}\,d\mu(x_1, \ldots, x_n),\qquad k\in\N.
	\]
	We say that $\mu$ satisfies the multivariate Carleman condition, or \emph{satisfies Carleman} for short, if for
	each $i=1,\dots,n$,
	\[
	\sum_{k=1}^{\infty} y_{i,2k}^{-1/(2k)}\;=\;+\infty.
	\]
\end{definition}

\begin{theorem}
	\label{thm:carleman-multi}
	If $\mu$ satisfies Carleman,
	then it is determinate.
\end{theorem}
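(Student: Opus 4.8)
The plan is to reduce this multivariate statement to the classical one-dimensional Denjoy--Carleman quasi-analyticity theorem, applied to the characteristic function $\widehat\mu(t):=\int e^{\mathrm{i}\langle t,x\rangle}\,d\mu(x)$, by peeling off one coordinate at a time. Let $\nu$ be an arbitrary finite positive Borel measure with the same moments as $\mu$. Then $\int x_i^{2k}\,d\nu = y_{i,2k} = \int x_i^{2k}\,d\mu$ for all $i$ and $k$, so $\nu$ also satisfies Carleman and all its moments are finite. Since a finite Borel measure on $\R^n$ is uniquely determined by its characteristic function, it suffices to prove $\widehat\mu\equiv\widehat\nu$ on $\R^n$.

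I would establish, by induction on $j=0,1,\dots,n$, the statement $P(j)$: for every polynomial $q\in\R[x_{j+1},\dots,x_n]$ and every $(t_1,\dots,t_j)\in\R^j$,
\[
\int e^{\mathrm{i}(t_1x_1+\dots+t_jx_j)}\,q\,d\mu \;=\; \int e^{\mathrm{i}(t_1x_1+\dots+t_jx_j)}\,q\,d\nu .
\]
The base case $P(0)$ is precisely the assumption that $\mu$ and $\nu$ have identical moments. For the inductive step, assume $P(j-1)$, fix a nonzero $q\in\R[x_{j+1},\dots,x_n]$ and $(t_1,\dots,t_{j-1})\in\R^{j-1}$, and consider
\[
h(s) \;:=\; \int e^{\mathrm{i}(t_1x_1+\dots+t_{j-1}x_{j-1}+sx_j)}\,q\,d\mu \;-\; \int e^{\mathrm{i}(t_1x_1+\dots+t_{j-1}x_{j-1}+sx_j)}\,q\,d\nu ,\qquad s\in\R .
\]
Since all moments of $\mu$ and $\nu$ are finite, the $k$-th $s$-derivative of each integrand is dominated by the integrable function $|x_j|^k|q|$; differentiating under the integral sign gives $h\in C^\infty(\R)$ with $h^{(k)}(s)=\mathrm{i}^k\int e^{\mathrm{i}(\cdots+sx_j)}x_j^k q\,d\mu-\mathrm{i}^k\int e^{\mathrm{i}(\cdots+sx_j)}x_j^k q\,d\nu$. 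Evaluating at $s=0$ and noting that $x_j^k q\in\R[x_j,\dots,x_n]$, the hypothesis $P(j-1)$ applied to the weight $x_j^k q$ shows $h^{(k)}(0)=0$ for every $k$.

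It then remains to deduce $h\equiv 0$. By Cauchy--Schwarz, uniformly in $s$,
\[
|h^{(k)}(s)| \;\le\; \int |x_j|^k|q|\,d\mu + \int |x_j|^k|q|\,d\nu \;\le\; \big(\|q\|_{L^2(\mu)}+\|q\|_{L^2(\nu)}\big)\,y_{j,2k}^{1/2} \;=:\; M_k .
\]
Again by Cauchy--Schwarz, $y_{j,2k}^2\le y_{j,2k-2}\,y_{j,2k+2}$, so the sequence $(M_k)$ is log-convex; and the Carleman condition for coordinate $j$ gives $\sum_k M_k^{-1/k}\ge\sum_k\big(\|q\|_{L^2(\mu)}+\|q\|_{L^2(\nu)}\big)^{-1/k}\,y_{j,2k}^{-1/(2k)}=+\infty$, since the positive prefactor tends to $1$. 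Hence the Carleman class $C\{M_k\}$ is quasi-analytic by the Denjoy--Carleman theorem, and a $C^\infty$ function in this class all of whose derivatives vanish at a point vanishes identically; thus $h\equiv 0$, in particular $h(t_j)=0$, which is $P(j)$. Taking $j=n$ and $q\equiv 1$ yields $\widehat\mu\equiv\widehat\nu$, hence $\mu=\nu$, i.e. determinacy.

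The main obstacle is the bookkeeping that makes the induction close: one must carry an arbitrary polynomial weight through each step, because differentiating in the new variable produces the monomial weight $x_j^k$, which is controlled only by the previous level of the induction. The second, more technical, point is transferring the scalar Carleman condition $\sum_k y_{j,2k}^{-1/(2k)}=+\infty$ into quasi-analyticity of $C\{M_k\}$; this relies on the estimate $\int|x_j|^k|q|\,d\mu\le\|q\|_{L^2(\mu)}\,y_{j,2k}^{1/2}$, the log-convexity of $(M_k)$, and the classical equivalence, for log-convex sequences, between $\sum_k M_k^{-1/k}=+\infty$ and quasi-analyticity of $C\{M_k\}$. (Alternatively, one may simply cite Nussbaum's theorem, or Petersen's reduction of multivariate determinacy to determinacy of the one-dimensional marginals together with the univariate Carleman theorem; the proofs of these follow essentially the scheme above.)
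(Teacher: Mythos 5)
The paper does not prove Theorem~\ref{thm:carleman-multi} itself; it cites \cite[Prop.~3.5]{L10} and \cite[Sec.~14.4]{S17} for the statement and for pointers to the original argument (Nussbaum, Petersen). Your proof is correct and is essentially the standard argument behind those references: pass to the characteristic function, peel off one coordinate at a time while carrying a polynomial weight through the induction, and close each step with the univariate Denjoy--Carleman quasi-analyticity theorem. All the technical ingredients check out: the weight $x_j^k q$ is in $\R[x_j,\dots,x_n]$ and hence admissible at level $P(j-1)$; differentiation under the integral sign is justified by the $s$-uniform dominator $|x_j|^k|q|\in L^1(\mu)\cap L^1(\nu)$; the sequence $y_{j,2k}^{1/2}$ is log-convex by Cauchy--Schwarz so the Carleman--Ostrowski form $\sum_k M_k^{-1/k}=\infty$ is the right quasi-analyticity criterion; and the prefactor $(\|q\|_{L^2(\mu)}+\|q\|_{L^2(\nu)})^{-1/k}\to 1$ does not affect divergence, so the normalization $M_0=1$ that textbook statements sometimes impose is harmless.
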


For a statement and references to the original proof see e.g. \cite[Prop. 3.5]{L10} and \cite[Sec. 14.4]{S17}.

\begin{lemma}\label{lem:carleman-finite-moments}
If $\mu$ satisfies Carleman, then all its moments
are finite.
\end{lemma}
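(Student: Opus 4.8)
The plan is to reduce the finiteness of an arbitrary mixed moment $y_\alpha=\int x^\alpha\,d\mu$ to the finiteness of the even marginal moments $y_{i,2k}$ from Definition \ref{def:marginals}, which is either built into or forced by the Carleman hypothesis. First I would record the elementary consequence of the AM--GM inequality that for any $x\in\R^n$ and any $\alpha\in\N^n$,
\[
|x^\alpha|=\prod_{i=1}^n |x_i|^{\alpha_i}\ \le\ \frac1n\sum_{i=1}^n |x_i|^{n\alpha_i},
\]
obtained by applying AM--GM to the $n$ nonnegative numbers $c_i:=|x_i|^{n\alpha_i}$ (so that $(c_1\cdots c_n)^{1/n}=|x^\alpha|$). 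Integrating this against $\mu$ and using monotonicity of the integral, it suffices to show that $\int_{\R^n}|x_i|^{m}\,d\mu<\infty$ for every $i\in\{1,\dots,n\}$ and every $m\in\N$.

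Next, for a single coordinate power I would dominate $|x_i|^m$ by an even power of $x_i$: since $|t|^m\le 1+t^{2\lceil m/2\rceil}$ for all $t\in\R$ (trivial for $|t|\le1$, and for $|t|\ge1$ it follows from $2\lceil m/2\rceil\ge m$), we obtain
\[
\int_{\R^n}|x_i|^{m}\,d\mu\ \le\ \mu(\R^n)+\int_{\R^n} x_i^{2\lceil m/2\rceil}\,d\mu\ =\ \mu(\R^n)+y_{i,\,2\lceil m/2\rceil}.
\]
Because $\mu$ is finite, the right-hand side is finite as soon as the even marginal moment $y_{i,2k}$ with $k=\lceil m/2\rceil$ is finite, and this would finish the argument.

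It therefore remains to justify that all even marginal moments are finite under Carleman. If one reads Definition \ref{def:marginals} as presupposing that each $y_{i,2k}$ is a finite real number, there is nothing further to do. Otherwise I would argue as follows: fix $i$ and suppose $y_{i,2k_0}=+\infty$ for some $k_0$. By the Lyapunov (power-mean) inequality for the finite measure $\mu$, namely $y_{i,2j}\le \mu(\R^n)^{1-j/k}\,y_{i,2k}^{\,j/k}$ for $j\le k$ (a one-line consequence of Hölder), finiteness of $y_{i,2k}$ is downward closed in $k$; hence $y_{i,2k}=+\infty$ for every $k\ge k_0$. With the convention $(+\infty)^{-1/(2k)}=0$, the series $\sum_{k\ge1} y_{i,2k}^{-1/(2k)}$ then has only finitely many nonzero terms and converges, contradicting the Carleman divergence. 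Thus every $y_{i,2k}$ is finite, and combining with the two displays above gives $|y_\alpha|\le\int_{\R^n}|x^\alpha|\,d\mu<\infty$ for all $\alpha$. The only genuinely delicate point is this last step — ruling out that the Carleman sum "diverges" merely by accident because of a tail of infinite terms — which is precisely why the Lyapunov/downward-closedness observation is needed; everything else is routine.
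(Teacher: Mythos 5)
Your proposal is correct and follows essentially the same route as the paper: reduce mixed moments to one-dimensional marginal moments via an AM--GM bound, reduce odd marginal moments to even ones via a pointwise domination, and derive finiteness of the even marginal moments from the Carleman hypothesis by observing that finiteness of $y_{i,2k}$ is downward closed in $k$ (H\"older/Lyapunov on the finite measure space), so that a single infinite even moment would make the Carleman series have only finitely many nonzero terms. The only cosmetic difference is the AM--GM variant: the paper uses the weighted form with weights $\alpha_i/|\alpha|$ to bound $|x^\alpha|\le\sum_i|x_i|^{|\alpha|}$ (a single exponent $|\alpha|$), while you use unweighted AM--GM with exponents $n\alpha_i$; both reduce to the same class of marginal integrals, and neither gains anything substantive over the other.
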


\begin{proof}
Even marginal moments are finite.
Let $i \in \{1,\ldots,n\}$ be given. If $y_{i,2k_0}=+\infty$ for some $k_0$, then on a finite measure space
$L^p\subset L^q$ for $p>q$, hence $|x_i|\notin L^{2k}$ for every $k\ge k_0$ and thus
$y_{i,2k}=+\infty$ for all $k\ge k_0$. The Carleman series would then have only finitely
many positive terms and converge, contradicting the hypothesis. Hence $y_{i,2k}<\infty$
for all $k\in\N$ and each $i$.
	
Marginal moments are finite.
Let $i \in {1,\ldots,n}$ and $r\ge 0$ be given. Choose $k\in\N$ with $2k\ge r$. Since $|x_i|\in L^{2k}(\mu)$ as above
and $\mu(\R^n)<\infty$, the embeddings on finite measure spaces give
\begin{equation}\label{eq:mombound}
\int_{\R^n} |x_i|^{r}\,d\mu \;=\;\|\,|x_i|\,\|_{L^{r}(\mu)}^{r}
\;\le\; \mu(\R^n)^{1-\frac{r}{2k}}\,\|\,|x_i|\,\|_{L^{2k}(\mu)}^{r}
\;<\;\infty.
\end{equation}
	
Absolute mixed moments are finite. Let $\alpha\in\N^n$ and $m:=|\alpha|=\sum_{i=1}^n\alpha_i$. For $m\ge 1$, apply the weighted arithmetic-geometric inequality to $a_i:=|x_i|^{m}$ with weights
$w_i:=\alpha_i/m$ (so $\sum_i w_i=1$):
\[
|x^\alpha|=\prod_{i=1}^n |x_i|^{\alpha_i}
=\prod_{i=1}^n a_i^{w_i}
\;\le\; \sum_{i=1}^n w_i a_i
\;\le\; \sum_{i=1}^n |x_i|^{m}.
\]
Integrating and using \eqref{eq:mombound} with $r=m$ yields
\[
\int_{\R^n} |x^\alpha|\,d\mu \;\le\; \sum_{i=1}^n \int_{\R^n} |x_i|^{m}\,d\mu \;<\;\infty.
\]
	
Signed moments are finite.
By the triangle inequality,
\[
\biggl|\int_{\R^n} x^\alpha\,d\mu\biggr|
\;\le\; \int_{\R^n} |x^\alpha|\,d\mu \;<\;\infty,
\]
so $\int x^\alpha\,d\mu$ is finite for every $\alpha$.
\end{proof}

\begin{lemma}\label{lem:compact-carleman}
	If $\mu$ has compact support, then it satisfies Carleman.
\end{lemma}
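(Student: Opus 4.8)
The statement is an immediate consequence of a crude bound on the even marginal moments. Since $\mathrm{supp}\,\mu$ is compact, it is contained in some closed ball, so there is a constant $R\ge 0$ with $|x_i|\le R$ for $\mu$-almost every $x$ and every $i=1,\dots,n$. The plan is to integrate the pointwise inequality $x_i^{2k}\le R^{2k}$ against $\mu$, extract the $(2k)$-th root, and observe that the resulting lower bound on the Carleman terms does not vanish as $k\to\infty$, which already forces divergence of the series.

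\textbf{Steps.} First I would fix $R>0$ with $\mathrm{supp}\,\mu\subseteq\{x\in\R^n:\norm{x}_\infty\le R\}$ (any norm works; the Euclidean one only changes $R$ by a dimensional factor). Second, for each $i$ and each $k\in\N$ I would write
\[
y_{i,2k}=\int_{\R^n} x_i^{2k}\,d\mu(x)\ \le\ R^{2k}\,\mu(\R^n),
\]
which in particular shows the marginal moments are finite. Third, taking $(2k)$-th roots,
\[
y_{i,2k}^{-1/(2k)}\ \ge\ R^{-1}\,\mu(\R^n)^{-1/(2k)}.
\]
Fourth, since $0<\mu(\R^n)<\infty$, we have $\mu(\R^n)^{-1/(2k)}\to 1$ as $k\to\infty$, so there is $k_0$ with $y_{i,2k}^{-1/(2k)}\ge \tfrac{1}{2R}$ for all $k\ge k_0$; the general term of $\sum_{k\ge 1} y_{i,2k}^{-1/(2k)}$ is therefore bounded below by a positive constant for large $k$, hence the series diverges to $+\infty$ for every $i$, which is exactly the Carleman condition.

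\textbf{Main obstacle.} There is essentially none: the argument is a one-line estimate, and the only thing to watch is degenerate input. If $\mu$ is the zero measure the statement is vacuous (or excluded by convention), and if $\mathrm{supp}\,\mu=\{0\}$ then $R=0$ and every marginal moment $y_{i,2k}$ vanishes, in which case the terms $y_{i,2k}^{-1/(2k)}$ are $+\infty$ and the series trivially diverges; otherwise $R>0$ and the computation above applies verbatim. So the ``hard part'' is merely bookkeeping the edge cases, and the substance is the bound $y_{i,2k}\le R^{2k}\mu(\R^n)$.
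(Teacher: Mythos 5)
Your proof is correct and follows essentially the same route as the paper's: bound $y_{i,2k}\le R^{2k}\mu(\R^n)$ using the compact support, take $(2k)$-th roots, and note that $\mu(\R^n)^{-1/(2k)}\to1$ so the Carleman terms are eventually bounded below by a positive constant. The only cosmetic difference is that you use a single $R$ from the $\ell^\infty$ ball while the paper uses per-coordinate bounds $R_i$; your brief discussion of the degenerate cases is a harmless addition.
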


\begin{proof}
	Since the support of $\mu$ is compact, for each $i=1,\ldots,n$ there exists $R_i>0$ such that $|x_i|\le R_i$ $\mu$-a.e. Let $M:=\mu(\R^n)\in(0,\infty)$. Then $y_{i,2k} \;\le\; R_i^{2k}\,M$, $
	k\in\N$, so $y_{i,2k}^{-1/(2k)} \;\ge\; R_i^{-1}\,M^{-1/(2k)}$.
	Since $M^{-1/(2k)}\to 1$ as $k\to\infty$, there is $k_0$ such that $y_{i,2k}^{-1/(2k)} \ge \tfrac12 R_i^{-1}$ for all $k\ge k_0$. Hence $\sum_{k=1}^\infty y_{i,2k}^{-1/(2k)} \;\ge\; \sum_{k=k_0}^\infty \tfrac12 R_i^{-1} \;=\;+\infty.$
	As this holds for each coordinate $i$, the multivariate Carleman condition is satisfied.
\end{proof}

\begin{lemma}\label{lem:gibbs}
	Let $p\in\R[x]$ be a coercive polynomial, i.e., $\lim_{\|x\|\to\infty} p(x)=+\infty$.
	Then the Gibbs measure $\mu(dx)=e^{-p(x)}\,dx$ satisfies Carleman.
\end{lemma}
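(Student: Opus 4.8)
The plan is to control, for each coordinate $i$, the growth rate of the even marginal moments $y_{i,2k}$ and then read off the Carleman series. The first task is to convert coercivity of $p$ into a quantitative lower bound. Since $p$ is coercive every sublevel set $\{p\le M\}$ is compact, and because $p$ is a polynomial the radial infimum $\phi(\rho):=\inf_{\|x\|\ge\rho}p(x)$ is a semialgebraic, non-decreasing function with $\phi(\rho)\to+\infty$; a {\L}ojasiewicz-type growth inequality at infinity then gives $c>0$, $R\ge0$ and an exponent $\gamma>0$ with $\phi(\rho)\ge c\rho^{\gamma}$ for $\rho\ge R$. Together with the fact that $p$ is bounded below on the compact ball $\{\|x\|\le R\}$ this yields $p(x)\ge c\|x\|^{\gamma}-C$ for all $x\in\R^n$ and some constant $C$. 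In particular $\mu(\R^n)=\int_{\R^n}e^{-p(x)}\,dx\le e^{C}\int_{\R^n}e^{-c\|x\|^{\gamma}}\,dx<\infty$, so $\mu$ is a finite measure.

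Next I would bound the moments crudely but effectively: for every $i$ and $k$,
\[
y_{i,2k}=\int_{\R^n}x_i^{2k}\,e^{-p(x)}\,dx\ \le\ \int_{\R^n}\|x\|^{2k}\,e^{-p(x)}\,dx\ \le\ e^{C}\int_{\R^n}\|x\|^{2k}\,e^{-c\|x\|^{\gamma}}\,dx .
\]
Passing to polar coordinates and substituting $t=c\|x\|^{\gamma}$ evaluates the last integral as $\tfrac{e^{C}\sigma_{n-1}}{\gamma}\,c^{-(2k+n)/\gamma}\,\Gamma\bigl(\tfrac{2k+n}{\gamma}\bigr)$, where $\sigma_{n-1}$ is the surface area of the unit sphere in $\R^n$; this also shows every marginal moment is finite, so the Carleman series is well defined. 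Taking $2k$-th roots, the constant prefactor tends to $1$, the power of $c$ stays bounded, and Stirling's formula gives $\Gamma\bigl(\tfrac{2k+n}{\gamma}\bigr)^{1/(2k)}\le A\,k^{1/\gamma}$ for a constant $A$ and all large $k$; hence $y_{i,2k}^{1/(2k)}\le A'\,k^{1/\gamma}$ and $y_{i,2k}^{-1/(2k)}\ge (A')^{-1}k^{-1/\gamma}$. Summing over $k$, $\sum_{k\ge1}y_{i,2k}^{-1/(2k)}\ge (A')^{-1}\sum_{k\ge1}k^{-1/\gamma}=+\infty$ as soon as $\gamma\ge1$, and since this holds for every coordinate $i$ we obtain the multivariate Carleman condition.

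The step I expect to be the main obstacle is the very first one: producing a growth bound with exponent $\gamma\ge1$. The {\L}ojasiewicz inequality at infinity by itself only guarantees some $\gamma>0$, and a coercive polynomial can grow sub-linearly along a thin valley escaping to infinity — for instance $p(x_1,x_2)=(x_1-x_2^3)^2+x_2^2$ is coercive but comparable to $\|x\|^{2/3}$ along the curve $x_1=x_2^3$ — so the crude inequality $x_i^{2k}\le\|x\|^{2k}$ does not by itself close the argument. I would therefore either bring in extra structure of $p$ (for example that the leading homogeneous form of a coercive polynomial is nonnegative, and in the favourable case positive definite, which gives $\gamma=\deg p\ge2$ outright), or refine the radial estimate into a valley-aware bound that still forces $y_{i,2k}^{1/(2k)}=O(k)$ for each coordinate; that is where I would concentrate the effort.
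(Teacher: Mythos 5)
Your approach is structurally identical to the paper's: a radial lower bound on $p$, spherical coordinates and the Gamma function to estimate the even marginal moments, and Stirling to extract the Carleman rate. The crucial difference is that you correctly flag the weak point: coercivity of $p$ does \emph{not} give a radial lower bound $p(x)\ge a\norm{x}^{\gamma}-b$ with $\gamma\ge 1$. The paper asserts, without justification, that a coercive polynomial has even degree $d$ with \emph{positive definite} highest-degree part, and hence $p(x)\ge a\norm{x}^{d}-b$; your example $p(x_1,x_2)=(x_1-x_2^3)^2+x_2^2$ refutes exactly this assertion (coercive, degree $6$, leading form $x_2^6$ vanishes on the line $x_2=0$, and $p$ grows like $\norm{x}^{2/3}$ along $x_1=x_2^3$). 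So the obstacle you foresaw is precisely the unproved step in the paper's own argument.

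In fact your example does more than expose a gap in the proof: it appears to refute the lemma as stated. Substituting $u=x_1-x_2^3$ and restricting to $u\in[0,1]$, $x_2\ge 2$ gives
\[
y_{1,2k}=\iint (u+x_2^3)^{2k}e^{-u^2-x_2^2}\,du\,dx_2 \;\ge\; e^{-1}\int_2^\infty x_2^{6k}e^{-x_2^2}\,dx_2 \;\gtrsim\; \Gamma\!\bigl(3k+\tfrac12\bigr),
\]
so $y_{1,2k}^{1/(2k)}\gtrsim c\,k^{3/2}$ by Stirling, hence $\sum_k y_{1,2k}^{-1/(2k)}\lesssim \sum_k k^{-3/2}<\infty$ and the Carleman series for $i=1$ converges. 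Thus the Gibbs measure of a coercive polynomial need not satisfy Carleman; the lemma requires an extra hypothesis (e.g.\ positive definite leading form, or a bound $p(x)\ge a\norm{x}-b$ at infinity), under which both your computation and the paper's go through with $\gamma\ge 1$. Your instinct to ``concentrate the effort'' on the radial growth exponent, and your proposed fix, are exactly right; the paper silently assumes what needs to be hypothesised.
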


\begin{proof}
If $p(x)$ is a coercive polynomial of degree $d$, then $d$ is an even integer and the highest-degree part is positive definite. Hence there exist constants $a > 0$ and $b \in \R$ such that for all $x \in \R^n$:
	\[
	p(x) \ge a\norm{x}^d - b.
	\]
	Therefore, the measure $\mu$ is bounded above by a simpler radial measure:
	\[
	d\mu(x) = e^{-p(x)}dx \le e^{b}e^{-a\norm{x}^d}dx.
	\]	
	We must check Carleman's condition for each one-dimensional marginal. Let's bound the moments $y_{i,2k} = \int_{\R^n} x_i^{2k} d\mu(x)$ for a fixed coordinate $i$. Since $|x_i| \le \norm{x}$, we have $x_i^{2k} \le \norm{x}^{2k}$ and
	\[
	y_{i,2k} = \int_{\R^n} x_i^{2k} e^{-p(x)}dx \le e^b \int_{\R^n} \norm{x}^{2k} e^{-a\norm{x}^d}dx.
	\]
	To evaluate the integral, let us use spherical coordinates. Let $r = \norm{x}$ and let $S_{n-1}$ be the surface area of the unit $(n-1)$-sphere.
	\[
	\int_{\R^n} \norm{x}^{2k} e^{-a\norm{x}^d}dx = S_{n-1} \int_0^\infty r^{2k} e^{-ar^d} r^{n-1} dr = S_{n-1} \int_0^\infty r^{2k+n-1} e^{-ar^d} dr.
	\]
	This integral evaluates in terms of the Gamma function to:
	\[
	\frac{S_{n-1}}{d} a^{-\frac{2k+n}{d}} \Gamma\left(\frac{2k+n}{d}\right).
	\]
	Thus, we have an upper bound on the moments where $c$ is a constant independent of $k$:
	\[
	y_{i,2k} \le c a^{-\frac{2k}{d}}  \Gamma\left(\frac{2k+n}{d}\right).
	\]
	Hence
	\[
	y_{i,2k}^{-1/(2k)}
	\;\ge\;
	c^{-1/(2k)}\,a^{1/d}\,
	\Gamma\!\Big(\frac{2k+n}{d}\Big)^{-1/(2k)}.
	\]
	Using Stirling’s formula for the Gamma function and the fact that $d\ge2$, one obtains
	for some constant $c'>0$ and all large $k$,
	\[
	y_{i,2k}^{-1/(2k)} \;\ge\; c'\,k^{-1/d}.
	\]
	Since $1/d\le1/2<1$, the series
	\[
	\sum_{k=1}^\infty y_{i,2k}^{-1/(2k)}
	\;\ge\; c' \sum_{k=1}^\infty \frac{1}{k^{1/d}}
	\]
	diverges. This holds for every coordinate $i$, so $\mu$ satisfies the multivariate
	Carleman condition.
\end{proof}

In \cite[Thm. 3.2.17]{DX14} it is shown that if $\mu$ satisfies the condition
\begin{equation}\label{eq:exp}
\int e^{c|x|} d\mu(x) < \infty
\end{equation}
for some $c>0$ then $\mu$ satisfies Carleman. 

\section{Polynomial Density}\label{sec:polydensity}

\begin{lemma}
	\label{lem:poly-in-L2}
	If $\mu$ satisfies Carleman,
	every polynomial $p\in\R[x]$ belongs to $L^2(\mu)$.
\end{lemma}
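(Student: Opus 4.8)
The plan is to reduce the claim to the finiteness of moments already established in Lemma~\ref{lem:carleman-finite-moments}. A polynomial $p\in\R[x]$ lies in $L^2(\mu)$ precisely when $p^2\in L^1(\mu)$, i.e. when $\int p(x)^2\,d\mu(x)<\infty$. But $p^2$ is itself a polynomial, so it suffices to show that \emph{every} polynomial is $\mu$-integrable; then applying this to $p^2$ finishes the proof.

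First I would write an arbitrary polynomial as $q(x)=\sum_{\alpha}c_\alpha x^\alpha$, a finite sum over multi-indices $\alpha\in\N^n$ with real coefficients $c_\alpha$. By the triangle inequality for the $L^1(\mu)$ norm,
\[
\int_{\R^n}|q(x)|\,d\mu(x)\;\le\;\sum_{\alpha}|c_\alpha|\int_{\R^n}|x^\alpha|\,d\mu(x).
\]
Each term $\int_{\R^n}|x^\alpha|\,d\mu(x)$ is finite: this is exactly the "absolute mixed moments are finite" step inside the proof of Lemma~\ref{lem:carleman-finite-moments}, which used the weighted AM--GM inequality together with the bound \eqref{eq:mombound} on one-dimensional marginal moments. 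Since the sum over $\alpha$ is finite, the right-hand side is finite, so $q\in L^1(\mu)$.

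Now I apply this with $q=p^2$. Since $p\in\R[x]$, its square $p^2$ is again a polynomial, hence $p^2\in L^1(\mu)$, which says $\int p(x)^2\,d\mu(x)<\infty$, i.e. $\|p\|_{L^2(\mu)}<\infty$. Therefore $p\in L^2(\mu)$, as claimed.

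There is no real obstacle here: the lemma is essentially a corollary of Lemma~\ref{lem:carleman-finite-moments}, and the only thing to be careful about is the logical move of squaring first and then invoking integrability of the resulting polynomial, rather than trying to control $p$ directly. One could alternatively cite the finiteness of the single mixed moment $y_{2\beta}$ for the top-degree contributions, but the clean route is the triangle-inequality bound above applied to $p^2$.
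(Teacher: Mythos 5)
Your proof is correct and follows essentially the same route as the paper: both reduce the claim to Lemma~\ref{lem:carleman-finite-moments}, observing that $\int p^2\,d\mu$ is a finite linear combination of moments, each of which is finite. Your version is slightly more elaborate in passing through absolute values and the $L^1$ triangle inequality before specializing to $q=p^2$, but since the signed moments are already known to be finite, the paper's one-line argument suffices and no extra care about cancellation is needed.
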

\begin{proof}
The squared norm $\|p\|^2_{L^2(\mu)}=\int p(x)^2\,d\mu(x)$ is a finite linear combination of moments and it is therefore finite by Lemma \ref{lem:carleman-finite-moments}.
\end{proof}

\begin{lemma}\label{lem:density}
	A subspace is dense in $L^2(\mu)$ iff its orthogonal complement is trivial.
\end{lemma}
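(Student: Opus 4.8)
The plan is to give the standard Hilbert-space argument built on the orthogonal decomposition theorem; there is no real subtlety here, only bookkeeping. Write $V\subseteq\Ltwo{\mu}$ for the subspace in question, $\overline V$ for its closure, and recall that $V^\perp:=\{g\in\Ltwo{\mu}:\langle f,g\rangle_{L^2(\mu)}=0\ \text{for all}\ f\in V\}$. The first thing I would record is that $V^\perp=\overline V^{\,\perp}$: since $g\mapsto\langle f,g\rangle_{L^2(\mu)}$ is continuous for fixed $f$, and $g\mapsto \langle g,h\rangle_{L^2(\mu)}$ is continuous for fixed $h$, a vector orthogonal to every element of $V$ is orthogonal to every limit of such elements, and conversely $V\subseteq\overline V$ gives the reverse inclusion.

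For the easy direction, assume $V$ is dense, so $\overline V=\Ltwo{\mu}$. Then $V^\perp=\overline V^{\,\perp}=\Ltwo{\mu}^{\perp}=\{0\}$, because any vector orthogonal to all of $\Ltwo{\mu}$ is in particular orthogonal to itself and hence has zero norm.

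For the converse, assume $V^\perp=\{0\}$. The key step is to invoke the projection theorem: $\overline V$ is a closed subspace of the Hilbert space $\Ltwo{\mu}$ (here completeness of $L^2(\mu)$ is essential), so $\Ltwo{\mu}=\overline V\oplus\overline V^{\,\perp}$, i.e. every $f\in\Ltwo{\mu}$ decomposes uniquely as $f=f_0+f_1$ with $f_0\in\overline V$ and $f_1\in\overline V^{\,\perp}=V^\perp=\{0\}$. Hence $f=f_0\in\overline V$, so $\overline V=\Ltwo{\mu}$ and $V$ is dense. I do not expect any genuine obstacle; the only point worth stating carefully is that the orthogonal complement is insensitive to passing to the closure, and that the decomposition $\Ltwo{\mu}=\overline V\oplus\overline V^{\,\perp}$ relies on $\overline V$ being closed and $\Ltwo{\mu}$ being complete. (An alternative route, if one wished to avoid the projection theorem, would combine Hahn--Banach with the Riesz representation of bounded linear functionals on $\Ltwo{\mu}$, but the projection-theorem argument is the shortest.)
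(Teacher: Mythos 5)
Your proof is correct and rests on the same core ingredient as the paper's: the orthogonal decomposition theorem in Hilbert space. The paper packages this as the double-orthogonal-complement identity $\overline{V}=(V^\perp)^\perp$ and reads off both implications, while you note $V^\perp=\overline{V}^{\,\perp}$ and apply the decomposition $L^2(\mu)=\overline V\oplus\overline V^{\,\perp}$ directly; these are only bookkeeping variants of the same argument.
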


\begin{proof}
	For a linear subspace $V\subset L^2(\mu)$, its orthogonal complement is
	\[
	V^\perp:=\{\,h\in L^2(\mu): \langle h,v\rangle_{L^2(\mu)}=0\ \text{for all } v\in V\,\}.
	\]
	It is a closed subspace of $L^2(\mu)$ and
	\begin{equation}\label{orth}
		\overline{V}\;=\;(V^\perp)^\perp .
	\end{equation}
	To see this, observe first that 
	$V\subset (V^\perp)^\perp$ by definition, hence $\overline V\subset (V^\perp)^\perp$ since $(V^\perp)^\perp$ is closed.
	For the reverse inclusion, let $P:=\overline V$. The orthogonal decomposition theorem says every $h\in L^2(\mu)$ can be written uniquely as
	$h=p+q$ with $p\in P$ and $q\in P^\perp$. If $h\in (V^\perp)^\perp$, then $h$ is orthogonal to $V^\perp$, hence to $P^\perp$ (because $V^\perp\supset P^\perp$). Thus $0=\langle h,q\rangle=\|q\|_2^2$, so $q=0$ and $h\in P=\overline V$. Therefore $(V^\perp)^\perp\subset \overline V$.	
	
	Now if $V^\perp=\{0\}$ then $(V^\perp)^\perp=L^2(\mu)$, and from relation \eqref{orth} it holds $\overline V=L^2(\mu)$, i.e.\ $V$ is dense. Conversely, if $V$ is dense, then $\overline V=(V^\perp)^\perp=L^2(\mu)$, which forces $V^\perp=\{0\}$.
\end{proof}	

\begin{lemma}
	\label{lem:inherit-carleman}
	Let $\mu$ satisfy Carleman. Let $\nu$ be a signed measure which is absolutely continuous w.r.t. $\mu$ with density $f \in L^2(\mu)$, i.e.\ $d\nu = f\,d\mu$. Let
	$\nu = \nu^+ - \nu^-$ be its Jordan decomposition, so that
	$d\nu^\pm = f_\pm\,d\mu$ with $f_+ := \max\{f,0\}$ and $f_- := \max\{-f,0\}$.
	Then each $\nu^\pm$ satisfies Carleman.
\end{lemma}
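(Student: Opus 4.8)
The plan is to bound the even marginal moments of $\nu^\pm$ by those of $\mu$ using Cauchy--Schwarz, and then to verify that the resulting sparser series still diverges. First I would note that by symmetry it suffices to treat $\nu^+$: replacing $f$ by $-f$ interchanges $\nu^+$ and $\nu^-$ and leaves $\|f\|_{L^2(\mu)}$ unchanged. Fix a coordinate $i$, write $c:=\|f\|_{L^2(\mu)}$ and $M:=\mu(\R^n)$; if $c=0$ then $\nu^+=0$ and the statement is trivial, so assume $c>0$. Using $0\le f_+\le|f|$, the fact (Lemma~\ref{lem:poly-in-L2}) that $x_i^{2k}\in L^2(\mu)$ with $\|x_i^{2k}\|_{L^2(\mu)}=(y_{i,4k})^{1/2}$, finite by Lemma~\ref{lem:carleman-finite-moments}, and Cauchy--Schwarz, one gets for every $k$
\[
\int x_i^{2k}\,d\nu^+\;=\;\int x_i^{2k}f_+\,d\mu\;\le\;\int x_i^{2k}|f|\,d\mu\;\le\;c\,(y_{i,4k})^{1/2}\;<\;\infty ,
\]
so all even marginal moments of $\nu^+$ are finite and, since $c^{-1/(2k)}\to1$,
\[
\Bigl(\int x_i^{2k}\,d\nu^+\Bigr)^{-1/(2k)}\;\ge\;c^{-1/(2k)}\,(y_{i,4k})^{-1/(4k)}\;\ge\;\tfrac12\,(y_{i,4k})^{-1/(4k)}
\]
for all large $k$. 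So it remains to show $\sum_{k\ge1}(y_{i,4k})^{-1/(4k)}=+\infty$, i.e.\ that passing from the Carleman series of $\mu$ to its $4k$-indexed subseries preserves divergence.

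To handle this I would normalize to the probability measure $\tilde\mu:=\mu/M$, so $(y_{i,m})^{1/m}=M^{1/m}\|x_i\|_{L^m(\tilde\mu)}$. By the nesting of $L^p$-spaces on a probability space, $m\mapsto\|x_i\|_{L^m(\tilde\mu)}$ is non-decreasing, so $\gamma_k:=\|x_i\|_{L^{2k}(\tilde\mu)}^{-1}$ is positive and non-increasing; since the bounded factors $M^{\pm1/(2k)}$ tend to $1$, the divergence of $\sum_k(y_{i,2k})^{-1/(2k)}$ is equivalent to that of $\sum_k\gamma_k$, and the divergence of $\sum_k(y_{i,4k})^{-1/(4k)}$ is equivalent to that of $\sum_k\gamma_{2k}$ (the degenerate case $x_i=0$ $\mu$-a.e.\ being trivial). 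The claim thus reduces to the elementary fact that for a positive non-increasing sequence $(\gamma_k)$, divergence of $\sum_k\gamma_k$ forces divergence of $\sum_k\gamma_{2k}$, which follows from a Cauchy-condensation-type grouping: using $\gamma_{2j}\le\gamma_{2j-1}$ and, for $j\ge2$, $\gamma_{2j-1}\le\gamma_{2j-2}$,
\[
\sum_{k\ge1}\gamma_k\;=\;\sum_{j\ge1}\bigl(\gamma_{2j-1}+\gamma_{2j}\bigr)\;\le\;2\sum_{j\ge1}\gamma_{2j-1}\;\le\;2\gamma_1+2\sum_{j\ge1}\gamma_{2j},
\]
whence $\sum_{k\ge1}\gamma_{2k}\ge\tfrac12\sum_{k\ge1}\gamma_k-\gamma_1=+\infty$. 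Unwinding the reductions yields $\sum_k(y_{i,4k})^{-1/(4k)}=+\infty$, hence $\sum_k\bigl(\int x_i^{2k}d\nu^+\bigr)^{-1/(2k)}=+\infty$; since $i$ was arbitrary, $\nu^+$ satisfies Carleman, and by the symmetry above so does $\nu^-$.

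The one genuinely non-routine point will be the comparison between the Carleman series indexed by $2k$ and the one indexed by $4k$ that Cauchy--Schwarz forces on us: a priori $\sum_k\gamma_k=\infty$ does not imply $\sum_k\gamma_{2k}=\infty$, and the implication is rescued only by the monotonicity of $\gamma_k$, i.e.\ by the monotonicity of $L^p$-norms on the normalized measure. Everything else---finiteness of the moments of $\mu$ from Lemma~\ref{lem:carleman-finite-moments}, the $L^2$-bound on the density, and the harmless vanishing-exponent factors $c^{-1/(2k)},M^{\pm1/(2k)}\to1$---is bookkeeping.
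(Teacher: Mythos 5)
Your proof is correct and follows essentially the same route as the paper: Cauchy--Schwarz reduces the claim to divergence of the $4k$-indexed subseries of the Carleman terms, which you then obtain from $L^p$-norm monotonicity together with a grouping argument identical in spirit to the paper's. One small refinement over the paper: you normalize to the probability measure $\tilde\mu=\mu/\mu(\R^n)$ before invoking $L^p$-nesting (and absorb the harmless factors $M^{\pm1/(2k)}\to1$), whereas the paper asserts that $r\mapsto\bigl(\int|x_i|^r\,d\mu\bigr)^{1/r}$ is nondecreasing directly from H\"older, which as stated is only guaranteed when $\mu(\R^n)\le1$.
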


\begin{proof}
	Given $i \in \{1,\dots,n\}$, $k \in \N$, consider the marginal moments $y_{i,2k}$ as in Definition \ref{def:marginals}. 
	Since $\mu$ satisfies Carleman, Lemma~\ref{lem:carleman-finite-moments}
	implies $y_{i,2k} < \infty$ for all $k$, and the Carleman series
	\[
	\sum_{k=1}^\infty y_{i,2k}^{-1/(2k)} = +\infty.
	\]	
	Let $f_\pm$ be as above, so $d\nu^\pm = f_\pm\,d\mu$ and $f_\pm \in L^2(\mu)$,
	$f_\pm \ge 0$. Define the even marginal moments of $\nu^\pm$ by
	\[
	z_{2k}^{\pm} := \int_{\mathbb R^n} x_i^{2k}\,d\nu^\pm(x)
	= \int_{\mathbb R^n} x_i^{2k} f_\pm(x)\,d\mu(x),
	\qquad k \in \mathbb N.
	\]
	
	\emph{Step 1: finiteness of the moments of $\nu^\pm$.}
	By Cauchy-Schwarz,
	\[
	z_{2k}^{\pm}
	\le \|f_\pm\|_{L^2(\mu)} \,
	\biggl(\int_{\mathbb R^n} x_i^{4k}\,d\mu(x)\biggr)^{1/2},
	\]
	and the right-hand side is finite since all moments of $\mu$ are finite
	(Lemma~\ref{lem:carleman-finite-moments} with $r = 4k$).
	Thus $z_{2k}^{\pm} < \infty$ for all $k$.
	
	\emph{Step 2: a lower bound on the Carleman terms.}
	From the inequality above we obtain, for $k \ge 1$,
	\[
	\bigl(z_{2k}^{\pm}\bigr)^{-1/(2k)}
	\;\ge\;
	\|f_\pm\|_{L^2(\mu)}^{-1/(2k)}
	\biggl(\int_{\mathbb R^n} x_i^{4k}\,d\mu(x)\biggr)^{-1/(4k)}.
	\]
	Let
	\[
	c_k := y_{i,2k}^{-1/(2k)}.
	\]
	Then
	\[
	\bigl(z_{2k}^{\pm}\bigr)^{-1/(2k)}
	\;\ge\;
	\|f_\pm\|_{L^2(\mu)}^{-1/(2k)}\,c_{2k}.
	\]
	Since $\|f_\pm\|_{L^2(\mu)}$ is fixed, we have
	$\|f_\pm\|_{L^2(\mu)}^{-1/(2k)} \to 1$ as $k\to\infty$. Thus there exist
	constants $\gamma \in (0,1)$ and $k_0 \in \mathbb N$ such that
	\begin{equation}\label{eq:y-lower}
		\bigl(z_{2k}^{\pm}\bigr)^{-1/(2k)} \;\ge\; \gamma\,c_{2k}
		\qquad\text{for all }k \ge k_0.
	\end{equation}
	
	\emph{Step 3: divergence of the subsequence $c_{2k}$.}
	We now use only that $c_k = y_{i,2k}^{-1/(2k)}$ comes from moments of a positive
	measure. Define $m_r := \bigl(\int |x_i|^r\,d\mu\bigr)^{1/r}$ for $r>0$.
	By Hölder’s inequality,
	the map $r \mapsto m_r$ is nondecreasing: if $0 < r < s$ then
	$m_r \le m_s$. In particular the sequence
	\[
	m_{2k} = \bigl(y_{i,2k}\bigr)^{1/(2k)}, \qquad k\ge1,
	\]
	is nondecreasing, hence
	\[
	c_k = y_{i,2k}^{-1/(2k)} = m_{2k}^{-1}
	\]
	is nonincreasing in $k$.
	
	We know that the full Carleman series $\sum_{k=1}^\infty c_k$ diverges and that
	$(c_k)$ is nonincreasing and nonnegative. For such a sequence, any even index
	subseries also diverges. Indeed, for each $N$,
	\[
	\sum_{k=1}^N c_{2k}
	\;\ge\; \frac12 \sum_{k=1}^N \bigl(c_{2k} + c_{2k+1}\bigr)
	\;=\; \frac12\Bigl(\sum_{j=2}^{2N+1} c_j\Bigr)
	\;=\; \frac12\Bigl(\sum_{j=1}^{2N+1} c_j - c_1\Bigr).
	\]
	As $N\to\infty$, the sum $\sum_{j=1}^{2N+1} c_j$ tends to $+\infty$, hence
	$\sum_{k=1}^\infty c_{2k}$ also diverges.
	
	\emph{Step 4: Carleman for $\nu^\pm$.}
	Combining \eqref{eq:y-lower} with the divergence of $\sum c_{2k}$ we obtain
	\[
	\sum_{k=1}^\infty \bigl(z_{2k}^{\pm}\bigr)^{-1/(2k)}
	\;\ge\;
	\sum_{k=k_0}^\infty \bigl(z_{2k}^{\pm}\bigr)^{-1/(2k)}
	\;\ge\;
	\gamma \sum_{k=k_0}^\infty c_{2k}
	\;=\; +\infty.
	\]
	Thus the Carleman series for the $i$-th marginal of $\nu^\pm$ diverges.
\end{proof}

\begin{theorem}\label{thm:poly-dense}
If $\mu$ satisfies Carleman, then polynomials are dense in $L^2(\mu)$. Equivalently, for every $f \in L^2(\mu)$ and every $\varepsilon>0$, there exists $p \in \R[x]$ such that $\|f-p\|_{L^2(\mu)}<\varepsilon$.
\end{theorem}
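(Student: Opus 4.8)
The plan is to use the orthogonality criterion of Lemma~\ref{lem:density}: since $\R[x]\subset L^2(\mu)$ by Lemma~\ref{lem:poly-in-L2}, density of polynomials in $L^2(\mu)$ is equivalent to the triviality of the orthogonal complement $\R[x]^\perp$. So I would fix an arbitrary $f\in L^2(\mu)$ with $\langle f,x^\alpha\rangle_{L^2(\mu)}=0$ for every $\alpha\in\N^n$ and aim to prove $f=0$ $\mu$-a.e.

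First I would introduce the signed measure $d\nu:=f\,d\mu$. It is well defined and finite because $f\in L^2(\mu)\subset L^1(\mu)$, and for each monomial $x^\alpha\in L^2(\mu)$ the product $x^\alpha f$ lies in $L^1(\mu)$ by Cauchy--Schwarz; hence every moment of $\nu$ is defined and equals $\langle f,x^\alpha\rangle_{L^2(\mu)}=0$. Writing the Jordan decomposition $\nu=\nu^+-\nu^-$ with $d\nu^\pm=f_\pm\,d\mu$ and $f_\pm=\max\{\pm f,0\}\in L^2(\mu)$, both $\nu^+$ and $\nu^-$ are finite positive Borel measures (their masses are bounded by $\|f_\pm\|_{L^2(\mu)}\,\mu(\R^n)^{1/2}$), and the vanishing of all moments of $\nu$ says precisely that $\nu^+$ and $\nu^-$ have one and the same moment sequence, in particular the same total mass coming from the order-$0$ moment.

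Now the key step: by Lemma~\ref{lem:inherit-carleman}, each of $\nu^+$ and $\nu^-$ inherits the Carleman condition from $\mu$, so by Theorem~\ref{thm:carleman-multi} each is determinate. Since $\nu^+$ is determinate and $\nu^-$ shares its (finite) moment sequence, uniqueness forces $\nu^-=\nu^+$, i.e.\ $\nu=0$. Hence $\int_A f\,d\mu=0$ for every Borel set $A$, which gives $f=0$ $\mu$-a.e. Therefore $\R[x]^\perp=\{0\}$ and Lemma~\ref{lem:density} yields that $\R[x]$ is dense in $L^2(\mu)$; the ``equivalently'' reformulation is simply the definition of density in a normed space.

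The real content of the argument is concentrated in Lemma~\ref{lem:inherit-carleman} (that the Jordan parts of $f\,d\mu$ again satisfy Carleman) together with the determinacy Theorem~\ref{thm:carleman-multi}; given those, the only point requiring care is conceptual rather than technical. One cannot apply the determinacy theorem to the signed measure $\nu$ directly --- it is a statement about positive measures --- so passing through the Jordan decomposition is essential, and one must verify that $\nu^\pm$ are genuinely finite positive measures with a common finite moment sequence before invoking uniqueness.
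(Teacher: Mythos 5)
Your proposal is correct and follows essentially the same route as the paper's proof: orthogonality criterion via Lemma~\ref{lem:density}, forming the signed measure $d\nu=f\,d\mu$, passing to the Jordan decomposition, invoking Lemma~\ref{lem:inherit-carleman} and Theorem~\ref{thm:carleman-multi} to conclude $\nu^+=\nu^-$, and hence $f=0$ $\mu$-a.e. The reasoning and the key lemmas used match the paper's argument step for step.
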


\begin{proof}
	Assume $f\in L^2(\mu)$ is orthogonal to all polynomials:
	\[
	\int_{\R^n} f(x)\,p(x)\,d\mu(x)=0\qquad\forall\,p\in\R[x].
	\]

	Define a finite signed measure $\nu$ by $d\nu:=f\,d\mu$. Finiteness holds because $\mu$ is finite and
	$\int |f|\,d\mu\le \mu(\R^n)^{1/2}\|f\|_{2}<\infty$.
	
	For any multi-index $\alpha\in\N^n$, Cauchy-Schwarz and Lemma~\ref{lem:poly-in-L2} give
	\[
	\int_{\R^n} |x^\alpha|\,d|\nu|(x)
	= \int_{\R^n} |x^\alpha|\,|f(x)|\,d\mu(x)
	\le \Big(\int_{\R^n} x^{2\alpha}\,d\mu\Big)^{1/2}\ \|f\|_{L^2(\mu)} \ <\ \infty.
	\]
	Hence the $\alpha$-moment of $\nu$ is finite, and by orthogonality
	\[
	\int_{\R^n} x^\alpha\,d\nu(x) \;=\; \int_{\R^n} f(x)\,x^\alpha\,d\mu(x)\;=\;0
	\qquad \forall\,\alpha\in\N^n.
	\]
Write the Jordan decomposition $\nu=\nu^+-\nu^-$ with $\nu^\pm$ finite positive measures.
Since $|\nu|$ has all moments finite and $\nu^\pm \le |\nu|$ as measures, each $\nu^\pm$
also has all (absolute) moments finite. Moreover,
\[
\int_{\R^n} x^\alpha\,d\nu(x)
= \int_{\R^n} x^\alpha\,d\nu^+(x)
- \int_{\R^n} x^\alpha\,d\nu^-(x)
= 0
\qquad\forall\alpha\in\N^n,
\]
so
\[
\int x^\alpha\,d\nu^+(x)
= \int x^\alpha\,d\nu^-(x)
\qquad\forall\alpha\in\N^n.
\]
	Since $\mu$ satisfies Carleman, by Lemma \ref{lem:inherit-carleman}, $\nu^\pm$ satisfy Carleman as well. By Theorem \ref{thm:carleman-multi} they are determinate. Since they share all moments they be must be equal:  $\nu^+=\nu^-$ and therefore $\nu=0$.
	
	Since $\nu=f\,\mu=0$ as a signed measure, we have $\int_E f\,d\mu=0$ for all measurable $E$.
	Taking $E=\{f>0\}$ and $E=\{f<0\}$ yields $\mu(\{f\ne0\})=0$, i.e., $f=0$ in $L^2(\mu)$.
	
	Thus the orthogonal complement of the polynomials is $\{0\}$, so polynomials are dense in $L^2(\mu)$.
\end{proof}

It is recalled in \cite[Sec. 3.2.3, p. 69]{DX14} or \cite[Sec. 14.1]{S17} that in the univariate case, if $\mu$ is determinate then polynomials are dense in $L^2(\mu)$, but this is not true in the multivariate case. If $\mu$ satisfies \eqref{eq:exp}, it is shown in \cite[Thm. 3.2.18]{DX14} that polynomials are dense in $L^2(\mu)$.
In \cite[Def. 14.1]{S17} a distinction is made between determinate, strictly determinate and strongly determinate measures, depending on $L ^2$ density of polynomials. In \cite[Thm. 14.19]{S17} it is shown that if a measure satisfies Carleman, then it is strongly determinate. From \cite[Thm. 14.2]{S17} a strongly determinate measure is strictly determinate, which implies $L^2(\mu)$ density of polynomials. So our Theorem \ref{thm:poly-dense} can be alternatively proven with \cite[Thm. 14.2]{S17} and \cite[Thm. 14.19]{S17}. 

	\section{SOS Density}\label{sec:sosdensity}
	
	Let $$P:=\left\{p\in\Pol:\ p(x)\ge0,\ \forall x\in\R^n\right\}$$ denote the cone of \emph{positive polynomials}, and let $$\Sig:=\left\{\sum_k q_k^2:\ q_k\in\Pol\right\} \subset P$$ denote the cone of \emph{sums of squares (SOS)} of polynomials.
	
	We assume in the remainder of the paper that $\mu$ satisfies Carleman, so that the density Theorem~\ref{thm:poly-dense} applies. 
	The following instrumental result was stated and proved in \cite[Theorem 2.2]{L13}. 		
	\begin{lemma}
		\label{lem:h-nonneg}
		If $h\in \Ltwo{\mu}$ satisfies $\int q(x)^2\,h(x)\,d\mu(x)\ge0$ for all $q\in\Pol$, then $h\ge0$ $\mu$-a.e.
	\end{lemma}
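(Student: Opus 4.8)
The plan is to argue by contradiction. Suppose the Borel set $E:=\{x\in\R^n:h(x)<0\}$ has $\mu(E)>0$; I will exhibit a polynomial $q$ with $\int q^2 h\,d\mu<0$, contradicting the hypothesis.

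The first step is to pass to a weighted measure adapted to $h$. Set $d\sigma:=|h|\,d\mu$. Since $\mu$ is finite and $h\in\Ltwo{\mu}\subset L^1(\mu)$, $\sigma$ is a finite positive Borel measure, and $|h|\in\Ltwo{\mu}$ with $|h|\ge0$. Applying Lemma~\ref{lem:inherit-carleman} to the (here positive) measure with $\Ltwo{\mu}$-density $|h|$ shows that $\sigma$ again satisfies Carleman. Consequently Theorem~\ref{thm:poly-dense} applies to $\sigma$: polynomials are dense in $\Ltwo{\sigma}$.

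The second step exploits that the cone of squares, though restrictive, still reaches indicator functions ``for free'', because $\chi_E^2=\chi_E$. Since $\sigma$ is finite and $\chi_E$ is bounded, $\chi_E\in\Ltwo{\sigma}$, so I can pick polynomials $q_m$ with $q_m\to\chi_E$ in $\Ltwo{\sigma}$. Then $\|q_m+\chi_E\|_{\Ltwo{\sigma}}$ stays bounded, and by Cauchy--Schwarz
\[
\|q_m^2-\chi_E\|_{L^1(\sigma)}=\|(q_m-\chi_E)(q_m+\chi_E)\|_{L^1(\sigma)}\le\|q_m-\chi_E\|_{\Ltwo{\sigma}}\,\|q_m+\chi_E\|_{\Ltwo{\sigma}}\longrightarrow0 .
\]
Writing $s:=\operatorname{sign}h$, so that $h=s\,|h|$ and $|s|\le1$, and noting $q_m^2 h\in L^1(\mu)$ (each $q_m^2$ is a polynomial, hence in $\Ltwo{\mu}$ by Lemma~\ref{lem:poly-in-L2}, and $h\in\Ltwo{\mu}$), I compute
\[
\int q_m^2\,h\,d\mu=\int q_m^2\,s\,d\sigma\xrightarrow[m\to\infty]{}\int\chi_E\,s\,d\sigma=-\,\sigma(E),
\]
where the limit uses $|s|\le1$ together with $q_m^2\to\chi_E$ in $L^1(\sigma)$. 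Finally $\sigma(E)=\int_E|h|\,d\mu>0$, since $|h|>0$ everywhere on $E$ and $\mu(E)>0$; hence $\int q_m^2\,h\,d\mu<0$ for all large $m$, a contradiction. Therefore $\mu(E)=0$, i.e.\ $h\ge0$ $\mu$-a.e.

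I expect the main (and essentially the only) subtlety to be the passage to the weighted measure $\sigma=|h|\,d\mu$: the approximation of the test function must be controlled in the topology of $\Ltwo{\sigma}$, not $\Ltwo{\mu}$, so that it survives integration against $h$, and this requires knowing that $\sigma$ inherits the Carleman condition --- precisely the content of Lemma~\ref{lem:inherit-carleman}. Everything else is routine Cauchy--Schwarz and dominated-convergence bookkeeping, and the apparent restriction to squares $q^2$ disappears once one tests against the idempotent $\chi_E$.
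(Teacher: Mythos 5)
The paper does not prove this lemma itself: it cites \cite[Theorem~2.2]{L13} and moves on. What you have supplied is therefore a genuine addition, not a reconstruction of a proof in the text. Your argument is correct, and it is pleasingly self-contained within the tools the paper has already built: passing to the weighted measure $d\sigma = |h|\,d\mu$, invoking Lemma~\ref{lem:inherit-carleman} to see that $\sigma$ still satisfies Carleman (since $|h|\ge 0$ with $|h|\in L^2(\mu)$, here $\nu^+=\sigma$), then using Theorem~\ref{thm:poly-dense} for $\sigma$ to approximate the idempotent $\chi_E$. The Cauchy--Schwarz upgrade from $q_m\to\chi_E$ in $L^2(\sigma)$ to $q_m^2\to\chi_E$ in $L^1(\sigma)$ is exactly right, the pairing against $s=\operatorname{sign}h$ with $|s|\le 1$ is clean, and the finiteness of $\int q_m^2 h\,d\mu$ is correctly justified via Lemma~\ref{lem:poly-in-L2}. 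The move to $\sigma$ is essential, not cosmetic: if one tried to approximate $\chi_E$ in $L^2(\mu)$ directly, the passage $\int q_m^2 h\,d\mu\to\int\chi_E h\,d\mu$ would require boundedness of $h$ (or $L^4$ density), and $h$ is only in $L^2(\mu)$; your weighting absorbs $|h|$ into the measure and sidesteps this. Compared with Lasserre's original treatment (which the paper characterizes as a coefficient-space argument in a weighted topology of moment sequences), your proof is a function-space argument in the spirit this paper advocates, and it would be a natural candidate to include in the text in place of a bare citation. One small nit: your closing remark mentions ``dominated-convergence bookkeeping,'' but no dominated convergence is used --- the bound $|s|\le 1$ together with $L^1(\sigma)$ convergence already suffices, which is precisely what you wrote.
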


	\begin{theorem}
		\label{thm:sos-density}
		If $\mu$ satisfies Carleman, then the cone $\Sigma$ is dense in 
		$P$ w.r.t. $L^2(\mu)$. Equivalently, for every $p\in P$ and every $\varepsilon>0$, there exists $s\in\Sigma$ such that
		$\|p-s\|_{L^2(\mu)}<\varepsilon$.
	\end{theorem}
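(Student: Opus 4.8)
The plan is a short separation argument in the Hilbert space $L^2(\mu)$, with Lemma~\ref{lem:h-nonneg} supplying the only nontrivial ingredient. First I would record that $\Sigma\subseteq P\subseteq L^2(\mu)$: by Lemma~\ref{lem:poly-in-L2} every polynomial lies in $L^2(\mu)$, and elements of $\Sigma$ are polynomials. Moreover $\Sigma$ is a convex cone (closed under addition and under multiplication by nonnegative scalars), hence so is its $L^2(\mu)$-closure $\overline\Sigma$, which is in addition a closed convex set containing $0$.

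Suppose, for contradiction, that $\Sigma$ is not dense in $P$, so there is $p\in P$ with $p\notin\overline\Sigma$. Let $p_0$ be the metric projection of $p$ onto the closed convex set $\overline\Sigma$, and set $h:=p_0-p\in L^2(\mu)$, $h\neq 0$. The variational characterization of the projection gives $\langle h,\,c-p_0\rangle_{L^2(\mu)}\ge 0$ for every $c\in\overline\Sigma$. Testing this with $c=0$ and $c=2p_0$, both of which belong to the cone $\overline\Sigma$, forces $\langle h,p_0\rangle_{L^2(\mu)}=0$; hence $\langle h,c\rangle_{L^2(\mu)}\ge 0$ for all $c\in\overline\Sigma$, while $\langle h,p\rangle_{L^2(\mu)}=\langle h,p_0\rangle_{L^2(\mu)}-\|h\|_{L^2(\mu)}^2=-\|h\|_{L^2(\mu)}^2<0$.

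Now I would exploit these two facts. Since $q^2\in\Sigma\subseteq\overline\Sigma$ for every $q\in\R[x]$, the first inequality reads $\int q(x)^2 h(x)\,d\mu(x)\ge 0$ for all $q$, so Lemma~\ref{lem:h-nonneg} yields $h\ge 0$ $\mu$-a.e. On the other hand $p\in P$ means $p\ge 0$ on all of $\R^n$, in particular $\mu$-a.e. Therefore $\langle h,p\rangle_{L^2(\mu)}=\int p\,h\,d\mu\ge 0$, contradicting $\langle h,p\rangle_{L^2(\mu)}<0$. Hence $P\subseteq\overline\Sigma$, which is exactly the asserted density. Equivalently, Lemma~\ref{lem:h-nonneg} identifies the dual cone $\Sigma^{\ast}$ (in $L^2(\mu)$) with the cone of $\mu$-a.e.\ nonnegative functions, and the bipolar theorem gives $\overline\Sigma=\Sigma^{\ast\ast}\supseteq P$.

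I do not anticipate a genuine obstacle: the separation step is standard Hilbert-space geometry, and the positivity input is precisely Lemma~\ref{lem:h-nonneg} (itself borrowed from \cite{L13}). The only point needing a little care is that $\Sigma$ and $P$ are cones rather than subspaces, so Lemma~\ref{lem:density} does not apply directly; one uses the projection/separation theorem for closed convex cones instead of the orthogonal-complement criterion, and it is exactly the cone structure that upgrades the separating functional's single inequality to the $\mu$-a.e.\ sign conclusion via the test functions $q^2$.
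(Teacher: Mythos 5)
Your proof is correct and is essentially the paper's own argument: separate $p$ from $\overline\Sigma$ in $L^2(\mu)$, test the separating functional $h$ against squares $q^2$ to invoke Lemma~\ref{lem:h-nonneg} and conclude $h\ge 0$ $\mu$-a.e., then contradict $\langle h,p\rangle<0$ using $p\ge0$. The only cosmetic difference is that you construct $h$ explicitly via the metric projection onto the closed convex cone (and check $\langle h,p_0\rangle=0$ by testing $c=0,2p_0$), whereas the paper simply cites a Hilbert-space separating-hyperplane theorem; the substance is the same.
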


\begin{proof}
	Let $C$ denote the  $L^2(\mu)$ closure of the SOS cone $\Sigma$, a nonempty, closed, convex cone in the Hilbert space $L^2(\mu)$.
    Fix $p\in P$ and let us prove that $p\in C$.
Assume by contradiction that $p\notin C$.
By the Separating Hyperplane Theorem in Hilbert space -- see e.g. \cite[Thm. 3.38]{BC11} --
there exist $h\in L^2(\mu)\setminus\{0\}$ such that
\begin{equation}\label{eq:sep}
		\langle h, p\rangle_{L^2(\mu)}\ <\ 0\ \leq\ \langle h, s\rangle_{L^2(\mu)}\, \quad\forall\, s\in C.
\end{equation}
Since $\Sigma$ contains $q^2$ for every polynomial $q$, the first inequality in \eqref{eq:sep} gives
$\int q^2\, h\,d\mu\ge 0$ for all $q\in\R[x]$.
By Lemma~\ref{lem:h-nonneg}
this implies $h\ge 0$ $\mu$-a.e.
Because $p\in P$ is nonnegative everywhere and $h\ge 0$ a.e., we have
$\langle   h,p\rangle_{L^2(\mu)}=\int p\,  h\,d\mu\ge 0$, contradicting \eqref{eq:sep}.
Therefore $p\in C$.
\end{proof}

Let $\R[x]_d$ denote the vector space of polynomials of degree up to $d$. 
Let $b=(b_\alpha)_{|\alpha|\le d}$ denote a basis of $\R[x]_d$ orthonormal w.r.t. $\mu$. In this basis any polynomial $p = \sum_{|\alpha|\leq d} p_\alpha b_\alpha$ can be expressed with the coefficient vector $(p_\alpha)_{|\alpha| \leq d}$, and by orthonormality and bilinearity,
\begin{equation}\label{eq:normcoef}
\|p\|_{L^2(\mu)}^2
=\langle \sum_\alpha p_\alpha b_\alpha,\ \sum_\beta p_\beta b_\beta \rangle_{L^2(\mu)}
=\sum_{\alpha,\beta} p_\alpha p_\beta\,\langle b_\alpha,b_\beta\rangle_{L^2(\mu)}
=\sum_\alpha p_\alpha^2
=:\|p\|_2^2.
\end{equation}
In other words, the $L^2(\mu)$ norm of a polynomial is the Euclidean norm of its coefficient vector in an orthonormal basis of $L^2(\mu)$. Note that we use the notation $p$ indifferently for the polynomial function or the vector of its coefficients in basis $b$.

The $L^2(\mu)$ orthogonal \emph{projection} onto $\R[x]_d$ is the linear operator
\[
\Pi_d : L^2(\mu) \to \R[x]_d \subset L^2(\mu)
\]
defined as follows: for any $f \in L^2(\mu)$, the polynomial $\Pi_d f \in \R[x]_d$ is the unique minimizer
\[ 
	\Pi_d f \;=\; \arg\min_{p \in \R[x]_d} \|f - p\|_{L^2(\mu)}.
\]
Equivalently, the projection is characterized by the orthogonality conditions
\[
	\int \bigl(f(x) - \Pi_d f(x)\bigr)\,q(x)\,d\mu(x) \;=\; 0
	\quad\text{for all } q \in \R[x]_d
\]
or the orthogonal basis expression
\[
	\Pi_d f(x) \;=\; \sum_{|\alpha|\le d} \langle f,b_\alpha\rangle_{L^2(\mu)}\,b_\alpha(x).
\]

\begin{theorem}\label{thm:finite-degree-density}
If $\mu$ satisfies Carleman, then the cone $\Pi_{2d}(\Sigma)$ is dense in $P \cap \R[x]_{2d}$ w.r.t $L^2(\mu)$.
Equivalently, for every $p\in P \cap \R[x]_{2d}$ and every $\varepsilon>0$, there exists $s\in\Sigma$ such that
$\|p-\Pi_{2d}s\|_{L^2(\mu)}<\varepsilon$.
\end{theorem}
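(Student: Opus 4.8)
The plan is to obtain this finite-degree statement as a direct corollary of the global density Theorem~\ref{thm:sos-density}, using nothing more than the fact that the orthogonal projection $\Pi_{2d}$ is a linear, non-expansive map on $L^2(\mu)$ that restricts to the identity on $\R[x]_{2d}$.

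First I would fix $p\in P\cap\R[x]_{2d}$ and $\varepsilon>0$. Since in particular $p\in P$, Theorem~\ref{thm:sos-density} produces some $s\in\Sigma$ with $\|p-s\|_{L^2(\mu)}<\varepsilon$. Next I would apply $\Pi_{2d}$ to close the degree gap. Because $\Pi_{2d}$ is linear and $\Pi_{2d}p=p$ (as $p\in\R[x]_{2d}$), we have $p-\Pi_{2d}s=\Pi_{2d}(p-s)$. The orthogonal decomposition $f=\Pi_{2d}f+(f-\Pi_{2d}f)$ into orthogonal summands, together with Pythagoras' identity $\|f\|_{L^2(\mu)}^2=\|\Pi_{2d}f\|_{L^2(\mu)}^2+\|f-\Pi_{2d}f\|_{L^2(\mu)}^2$, gives $\|\Pi_{2d}f\|_{L^2(\mu)}\le\|f\|_{L^2(\mu)}$ for every $f\in L^2(\mu)$. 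Applying this with $f=p-s$ yields
\[
\|p-\Pi_{2d}s\|_{L^2(\mu)}=\|\Pi_{2d}(p-s)\|_{L^2(\mu)}\le\|p-s\|_{L^2(\mu)}<\varepsilon,
\]
with $\Pi_{2d}s\in\Pi_{2d}(\Sigma)\subset\R[x]_{2d}$, which is exactly the asserted approximation. Finally, since $\Pi_{2d}$ is linear and $\Sigma$ is a convex cone, $\Pi_{2d}(\Sigma)$ is again a convex cone, so the claim is indeed about the density of a cone inside $P\cap\R[x]_{2d}$ (regarded in the finite-dimensional space $\R[x]_{2d}$ with its $L^2(\mu)$ norm, equivalently the Euclidean norm on coefficient vectors by \eqref{eq:normcoef}).

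There is essentially no hard step here: the result is a one-line consequence of Theorem~\ref{thm:sos-density} once the non-expansiveness of $\Pi_{2d}$ is recorded. The only subtlety worth flagging explicitly — harmless for the statement — is that the approximants $\Pi_{2d}s$ need not themselves be nonnegative polynomials, i.e.\ $\Pi_{2d}(\Sigma)\not\subset P$ in general; density only requires every element of $P\cap\R[x]_{2d}$ to be an $L^2(\mu)$-limit of elements of $\Pi_{2d}(\Sigma)$, which is what the estimate above delivers.
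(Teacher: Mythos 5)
Your proposal is correct and follows exactly the paper's own argument: invoke Theorem~\ref{thm:sos-density} to get $s\in\Sigma$ close to $p$, then use that $\Pi_{2d}$ is an orthogonal projection fixing $\R[x]_{2d}$ to conclude $\|p-\Pi_{2d}s\|_{L^2(\mu)}\le\|p-s\|_{L^2(\mu)}<\varepsilon$. You spell out the Pythagoras-based contraction bound a bit more explicitly, but the reasoning is the same.
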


\begin{proof}
	By Theorem~\ref{thm:sos-density}, $\Sigma$ is dense in $P$ w.r.t. $L^2(\mu)$. 	Given $p\in P \cap \R[x]_{2d}$ and $\varepsilon>0$, pick $s\in\Sigma$ with $\|p-s\|_{L^2(\mu)}<\varepsilon$. 
	Since $p\in\R[x]_{2d}$ and $\Pi_{2d}$ is the $L^2(\mu)$ orthogonal projection,
	\[
	\|\,p-\Pi_{2d}s\,\|_{L^2(\mu)}\ \le\ \|\,p-s\,\|_{L^2(\mu)}\ <\ \varepsilon,
	\]
	which proves the density of $\Pi_{2d}(\Sigma)$ in $P \cap \R[x]_{2d}.$
\end{proof}

\begin{remark}
An $L^2(\mu)$ orthogonal projection
does not preserve positivity (or SOS) in general. For a simple example, take for 
$\mu$ the {uniform probability measure} on $[-1,1]$
and the SOS polynomial $p(x)=x^4$. 
Using the Legendre basis
$l_0(x)=1$, $l_1(x)=x$,$l_2(x)=\tfrac12(3x^2-1)$,
with inner products $\langle l_n,l_m\rangle=\int_{-1}^1 l_nl_m\,dx=
\frac{2}{2n+1}\,\delta_{nm}$, we compute the projection coefficients
\[
p_n=\frac{\langle p,l_n\rangle}{\langle l_n,l_n\rangle},\qquad n=0,1,2.
\]
Since $p$ is even, $\langle p,l_1\rangle=p_1=0$. Then
\[
\langle p,l_0\rangle=\int_{-1}^1 x^4\,dx=\frac{2}{5},\qquad
\langle p,l_2\rangle=\frac12\!\left(3\!\int_{-1}^1 x^6dx-\!\int_{-1}^1 x^4dx\right)
=\frac12\!\left(3\cdot\frac{2}{7}-\frac{2}{5}\right)=\frac{8}{35}.
\]
Hence, using $\langle l_0,l_0\rangle=2$ and $\langle l_2,l_2\rangle=\frac{2}{5}$,
\[
p_0=\frac{\langle p,l_0\rangle}{\langle l_0,l_0\rangle}=\frac{1}{5},\qquad
p_2=\frac{\langle p,l_2\rangle}{\langle l_2,l_2\rangle}=\frac{8/35}{2/5}=\frac{4}{7}.
\]
Therefore
\[
\Pi_2  p
\;=\; p_0 l_0 + p_2 l_2
\;=\; \frac{1}{5} + \frac{4}{7}\cdot\frac{3x^2-1}{2}
\;=\; \frac{6}{7}\,x^2 - \frac{3}{35}
\]
which is negative at $x=0$.

\end{remark}

\begin{remark}
	The cone $\Sigma\cap\R[x]_{2d}$ is a closed cone in a finite-dimensional space. 
	Since there exist positive polynomials whose neighborhoods are not SOS, $\Sigma\cap\R[x]_{2d}$ cannot be dense in $P\cap\R[x]_{2d}$. 
	Theorem~\ref{thm:finite-degree-density} states that if we allow an SOS of possibly higher degree, then its degree $2d$ truncation approximates any polynomial in $P\cap\R[x]_{2d}$ arbitrarily well.
\end{remark}

\begin{example}\label{ex:motzkin}
	The Motzkin polynomial $$p_M(x):=1-3x^2_1x^2_2+x^4_1x^2_2+x^2_1x^4_2$$ is a well-known example of positive but not SOS polynomial, i.e. $p_M \in P \setminus \Sigma$, see e.g. \cite[Section 2.4.2]{N23}.
	Consider $$s(x) :=  \tfrac12\,(x^2_1x_2+x_1x^2_2)^2 + \tfrac12\,(x^2_1x_2-x_1x^2_2)^2
	+(1-\tfrac32\,x^2_1x^2_2)^2.$$
	It holds $\tfrac12\,(x^2_1x_2+x_1x^2_2)^2+\tfrac12\,(x^2_1x_2-x_1x^2_2)^2
	=x^4_1x^2_2+x^2_1x^4_2$ and $(1-\tfrac32\,x^2_1x^2_2)^2 = 1-3x^2_1x^2_2+\tfrac{9}{4}x^4_1x^4_2$ and hence $s(x) = p_M(x)+\tfrac{9}{4}\,x^4_1x^4_2$. Therefore $s\in\Sigma\cap\R[x]_{8}$ matches $p_M \in P \cap \R[x]_6$ in all degrees $\le6$, i.e. $$\|p_M-\Pi_6 s\|_{L^2(\mu)}=0,$$ where $\mu$ is any measure on $\R^2$ satisfying Carleman. 
\end{example}

\section{Quadratic Module Density}\label{sec:qmdensity}

Let $g=(g_1,\dots,g_m)\subset\R[x]$ and let $g_0:=1$. Define the closed basic semialgebraic set
\begin{equation}\label{eq:set}
K:=\left\{x\in\R^n:\ g_j(x)\ge 0\ \text{for }j=1,\dots,m\right\},
\end{equation}
the quadratic module
\[ 
Q(g):=\left\{\sum_{j=0}^m s_j\,g_j\ :\ s_j\in\Sigma\right\}\ \subset\ \R[x],
\]
and the cone of polynomials positive on $K$
\[
P(K) := \left\{\,p\in\R[x]:\ p(x)\ge 0,\ \forall x\in K\,\right\}.
\]

\begin{theorem}\label{thm:denseqm}
	If $\mu$ satisfies Carleman, then the cones $Q(g)$ and $P(K)$ have the same closures w.r.t. $L^2(\mu)$.
	Equivalently, for every $p\in P(K)$ and every $\varepsilon>0$ there exists $s\in Q(g)$ such that
	$\|p-s\|_{L^2(\mu)}<\varepsilon$.
\end{theorem}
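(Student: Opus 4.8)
The plan is to prove the two inclusions $\overline{Q(g)}\subseteq\overline{P(K)}$ and $\overline{P(K)}\subseteq\overline{Q(g)}$ in $L^2(\mu)$; both cones lie in $L^2(\mu)$ by Lemma~\ref{lem:poly-in-L2}. The first inclusion is immediate: any generator $s=\sum_{j=0}^m s_jg_j$ with $s_j\in\Sigma$ satisfies $s(x)\ge 0$ for $x\in K$, since $g_0\equiv 1$, each $g_j\ge 0$ on $K$, and each $s_j\in\Sigma$ is nonnegative everywhere; hence $Q(g)\subseteq P(K)$ and, passing to closures, $\overline{Q(g)}\subseteq\overline{P(K)}$.

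For the reverse inclusion I would mimic the separation argument used in the proof of Theorem~\ref{thm:sos-density}. Let $C:=\overline{Q(g)}$, a nonempty closed convex cone, fix $p\in P(K)$, and assume for contradiction that $p\notin C$. Hilbert-space separation yields $h\in L^2(\mu)\setminus\{0\}$ with $\langle h,p\rangle_{L^2(\mu)}<0\le\langle h,s\rangle_{L^2(\mu)}$ for all $s\in C$. Since $q^2=q^2 g_0\in Q(g)$ for every $q\in\R[x]$, the right inequality gives $\int q^2 h\,d\mu\ge 0$ for all $q$, so Lemma~\ref{lem:h-nonneg} yields $h\ge 0$ $\mu$-a.e.

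The heart of the argument is to upgrade this to the statement that $h$ vanishes $\mu$-a.e.\ outside $K$. The naive route --- apply Lemma~\ref{lem:h-nonneg} to $g_j h$ to deduce $g_j h\ge 0$ a.e.\ --- breaks down because $g_j h$ need not belong to $L^2(\mu)$; this is the main obstacle, and I would get around it by changing the reference measure. Set $d\tilde\mu:=h\,d\mu$, a finite positive measure since $h\ge 0$ a.e.\ and $h\in L^2(\mu)$. By Lemma~\ref{lem:inherit-carleman} applied to the density $h$, the measure $\tilde\mu$ satisfies Carleman, hence by Lemma~\ref{lem:poly-in-L2} every polynomial --- in particular each $g_j$ --- lies in $L^2(\tilde\mu)$, and each polynomial also lies in $L^1(\tilde\mu)$ since $\tilde\mu$ is finite. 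For $j\in\{1,\dots,m\}$ and any $q\in\R[x]$ we have $q^2 g_j\in Q(g)$, so
\[
0\ \le\ \langle h,\,q^2 g_j\rangle_{L^2(\mu)}\ =\ \int q^2 g_j\,h\,d\mu\ =\ \int q^2 g_j\,d\tilde\mu .
\]
Applying Lemma~\ref{lem:h-nonneg} now to the measure $\tilde\mu$ (a finite measure satisfying Carleman, which is the standing hypothesis of that lemma) and the function $g_j\in L^2(\tilde\mu)$ gives $g_j\ge 0$ $\tilde\mu$-a.e., i.e.\ $\tilde\mu(\{g_j<0\})=\int_{\{g_j<0\}}h\,d\mu=0$, which together with $h\ge 0$ forces $h=0$ $\mu$-a.e.\ on $\{g_j<0\}$. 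Taking the union over $j=1,\dots,m$ gives $h=0$ $\mu$-a.e.\ on $\R^n\setminus K$.

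To conclude, $\langle h,p\rangle_{L^2(\mu)}=\int_K p\,h\,d\mu\ge 0$ because $p\ge 0$ on $K$ and $h\ge 0$, contradicting $\langle h,p\rangle_{L^2(\mu)}<0$. Hence $p\in C=\overline{Q(g)}$, so $P(K)\subseteq\overline{Q(g)}$ and therefore $\overline{P(K)}\subseteq\overline{Q(g)}$. Combining the two inclusions gives $\overline{Q(g)}=\overline{P(K)}$; the displayed ``equivalently'' reformulation is then just a restatement, since $P(K)\subseteq\overline{Q(g)}$ means precisely that every $p\in P(K)$ is an $L^2(\mu)$-limit of elements of $Q(g)$, while the converse approximation is the trivial inclusion $Q(g)\subseteq P(K)$.
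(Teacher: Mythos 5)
Your proof is correct and follows the paper's overall strategy (separating hyperplane in $L^2(\mu)$, Lemma~\ref{lem:h-nonneg} applied first to get $h\ge 0$ a.e.\ and then to localize $h$ on $K$, and finally the sign contradiction), but it handles one step more carefully than the paper does. The paper, after establishing $\int q^2 g_j h\,d\mu\ge0$ for all $q$, invokes Lemma~\ref{lem:h-nonneg} directly on the function $g_j h$ to conclude $g_j h\ge 0$ a.e.\ --- but Lemma~\ref{lem:h-nonneg} as stated requires the function to lie in $L^2(\mu)$, and $h\in L^2(\mu)$ together with $g_j$ polynomial does \emph{not} in general give $g_j h\in L^2(\mu)$ (only $g_j h\in L^1(\mu)$ by Cauchy--Schwarz, since $g_j\in L^2(\mu)$ by Lemma~\ref{lem:poly-in-L2}). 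Your fix --- introducing $d\tilde\mu:=h\,d\mu$, noting it is a finite positive measure, invoking Lemma~\ref{lem:inherit-carleman} to see that $\tilde\mu$ satisfies Carleman, so $g_j\in L^2(\tilde\mu)$ by Lemma~\ref{lem:poly-in-L2}, and then applying Lemma~\ref{lem:h-nonneg} with $\tilde\mu$ as the reference measure to obtain $g_j\ge0$ $\tilde\mu$-a.e.\ --- is clean, uses only lemmas already available in the paper, and repairs what is otherwise a genuine integrability gap. The only other (inconsequential) difference is organizational: the paper works directly with $p$ in $\overline{P(K)}$ and finishes by passing to the limit $p_k\to p$ with $p_k\in P(K)$, whereas you prove $P(K)\subseteq\overline{Q(g)}$ for $p\in P(K)$ and then take closures; both yield the stated equality.
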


\begin{proof}
	All the closures in this proof are w.r.t. $L^2(\mu)$.
	The inclusion of the  closure of $Q(g)$ into the  closure of $P(K)$ is immediate since $Q(g)\subset P(K)$ by construction.
	
	For the reverse inclusion, choose $p$ in the closure of $P(K)$ and suppose by contradiction that $p$ does not belong to $C$, defined as the  closure of $Q(g)$.
	By the Separating Hyperplane Theorem in  $L^2(\mu)$ -- see e.g. \cite[Thm. 3.38]{BC11} -- there exists $h\in L^2(\mu)\setminus\{0\}$ such that
	\begin{equation}\label{eq:sep-Q}
		\langle h, p\rangle_{L^2(\mu)}\ <\ 0\ \leq\ \langle h, s\rangle_{L^2(\mu)}\, \quad\forall\, s\in C.
	\end{equation}
	Since $\Sigma\subset Q(g)\subset C$, taking $s=q^2$ yields $\int q(x)^2\,h(x)\,d\mu(x)\ \ge\ 0$ for all $q\in\R[x].$
	By Lemma~\ref{lem:h-nonneg}, $h\ge 0$ $\mu$-a.e.
	
	Next, for each $j$ we also have $q^2 g_j\in Q(g)\subset C$, hence
	$\int q(x)^2\,g_j(x)\,h(x)\,d\mu(x)\ \ge\ 0$ for all $q\in\R[x].$
	By Lemma~\ref{lem:h-nonneg}  we get $g_j h\ge 0$ $\mu$-a.e. for every $j$.
	In particular, on $\{x:\ g_j(x)<0\}$ one must have $h(x)=0$.
	Therefore $h=0$ $\mu$-a.e.\ on $\{x:\ \min_j g_j(x)<0\}=\R^n\setminus K$, i.e., $\mathrm{spt}\,h\subseteq K$.
	
Combining $h\ge 0$ a.e.\ with $\mathrm{spt}\,h\subseteq K$ we now use that $p$ lies in the closure of $P(K)$:
consider a sequence $p_k\in P(K)$ with $p_k\to p$ in $L^2(\mu)$. Then
$\langle h,p_k\rangle_{L^2(\mu)} \ge\ 0$
for every $k$, and by Cauchy-Schwarz, $\langle h,p_k\rangle\to\langle h,p\rangle$ as $k\to\infty$.
Hence $\langle h,p\rangle\ge0$, contradicting \eqref{eq:sep-Q}. Therefore $p$ belongs to the closure of $Q(g)$.
This proves that the closures of $P(K)$ and $Q(g)$ coincide.
\end{proof}

\begin{remark}
No assumption on compactness of $K$ or Archimedean property of $Q(g)$ is required.  
\end{remark}

\begin{theorem}\label{thm:coeff-L2mu-dense}
	Let $\mu$ satisfy Carleman.
	Given $d\in\N$, the cone $\Pi_d(Q(g))$
	is  dense in $P(K)\cap \R[x]_d$ w.r.t. $L^2(\mu)$.
	Equivalently, for every $p\in P(K)\cap \R[x]_d$ and every $\varepsilon>0$ there exists $s\in Q(g)$ such that
	\[
	\big\|\,p-\Pi_d s\,\big\|_{L^2(\mu)}\ <\ \varepsilon.
	\]
\end{theorem}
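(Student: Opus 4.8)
The plan is to deduce this directly from the $L^2(\mu)$-closure identification of Theorem~\ref{thm:denseqm}, in exactly the same way that Theorem~\ref{thm:finite-degree-density} was deduced from Theorem~\ref{thm:sos-density}. First I would fix $p\in P(K)\cap\R[x]_d$ and $\varepsilon>0$. By Theorem~\ref{thm:denseqm} the $L^2(\mu)$-closures of $Q(g)$ and $P(K)$ coincide, so $p$ in particular lies in the $L^2(\mu)$-closure of $Q(g)$; hence there exists $s\in Q(g)$ with $\|p-s\|_{L^2(\mu)}<\varepsilon$. Note that $s\in L^2(\mu)$ by Lemma~\ref{lem:poly-in-L2}, so $\Pi_d s$ is well defined.

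The key step is to apply the orthogonal projection $\Pi_d$ and use that, as an orthogonal projection in the Hilbert space $L^2(\mu)$, it is a contraction: $\|\Pi_d f\|_{L^2(\mu)}\le\|f\|_{L^2(\mu)}$ for all $f\in L^2(\mu)$. Since $p\in\R[x]_d$ we have $\Pi_d p=p$, hence
\[
\|\,p-\Pi_d s\,\|_{L^2(\mu)}
=\|\,\Pi_d p-\Pi_d s\,\|_{L^2(\mu)}
=\|\,\Pi_d(p-s)\,\|_{L^2(\mu)}
\le\|\,p-s\,\|_{L^2(\mu)}<\varepsilon .
\]
As $\Pi_d s\in\Pi_d(Q(g))$, this exhibits $p$ as an $L^2(\mu)$-limit of elements of $\Pi_d(Q(g))$, which is the desired density. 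That $\Pi_d(Q(g))$ is a cone follows from linearity of $\Pi_d$ together with $Q(g)$ being a cone.

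I do not expect any real obstacle here: the statement is a one-line corollary of Theorem~\ref{thm:denseqm} and the contraction property of orthogonal projections, perfectly parallel to the passage from Theorem~\ref{thm:sos-density} to Theorem~\ref{thm:finite-degree-density}. The only subtlety worth flagging is that, as the earlier remark emphasizes, $\Pi_d(Q(g))$ need not be contained in $P(K)$ (projection destroys positivity); thus ``dense in $P(K)\cap\R[x]_d$'' must be understood as ``the $L^2(\mu)$-closure of $\Pi_d(Q(g))$ contains $P(K)\cap\R[x]_d$'', which is exactly what the displayed approximation inequality provides.
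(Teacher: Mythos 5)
Your proposal is correct and follows essentially the same argument as the paper: invoke Theorem~\ref{thm:denseqm} to find $s\in Q(g)$ with $\|p-s\|_{L^2(\mu)}<\varepsilon$, then use that $\Pi_d$ is an orthogonal projection (hence a contraction) fixing $p\in\R[x]_d$. The additional remarks you add about $\Pi_d s$ being well defined and about the meaning of density are sensible clarifications but do not change the substance.
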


\begin{proof}
By Theorem \ref{thm:denseqm}, the $L^2(\mu)$ closures of $Q(g)$ and $P(K)$ coincide. Fix $p\in P(K)\cap\R[x]_d$ and $\varepsilon>0$. Since $p$ belongs to the closure of $Q(g)$, there exists $s\in Q(g)$ satisfying $\|p-s\|_{L^2(\mu)}<\varepsilon$.
	Because $p\in\R[x]_d$ and $\Pi_d$ is an orthogonal projection (hence a contraction),
	\[
	\|\,p-\Pi_d s\,\|_{L^2(\mu)}
	\;=\;\|\,\Pi_d(p-s)\,\|_{L^2(\mu)}
	\;\le\;\|\,p-s\,\|_{L^2(\mu)}
	\;<\;\varepsilon.
	\]
	Since $\Pi_d s\in \Pi_d(Q(g))$, this shows that $\Pi_d(Q(g))$ is dense in $P(K)\cap\R[x]_d$ with respect to $L^2(\mu)$.
\end{proof}

\begin{example}\label{ex:origin}
	Let $g(x)=-x^2$ and $p(x)=x$. The quadratic module $Q(g)$ is Archimedean since it contains $1-x^2$.	For any $\varepsilon>0$, define the quadratic
	\[
	q_\varepsilon(x)\ :=\ \varepsilon\;+\;\frac{x}{2\varepsilon}\;-\;\frac{x^2}{8\,\varepsilon^3},
	\qquad
	s_\varepsilon(x)\ := q_\varepsilon(x)^2  \in\ Q(g).
	\]
	A direct expansion yields
	\[
	s_\varepsilon(x)
	= \varepsilon^2 + x - \frac{1}{8\varepsilon^4}\,x^3 + \frac{1}{64\varepsilon^6}\,x^4,
	\]
	so its degree-$2$ projection is
	\[
	\Pi_2\big(s_\varepsilon\big)\ =\ \varepsilon^2 + x.
	\]
	With $\mu$ the Gaussian measure, we obtain the exact error
	\[
	\|x-\Pi_2(s_\varepsilon)\|_{L^2(\mu)}^2 
	=\varepsilon^4\ \xrightarrow[\varepsilon\to 0]{}\ 0.
	\]
	Moreover, inspection reveals that an exact match $\Pi_2(s)=x$ is impossible in $Q(g)$.
\end{example}

\begin{example}\label{ex:stengle}
	Let $g(x):=(1-x^2)^3$ and $p(x):=1-x^2$. The quadratic module $Q(g)$ is Archimedean since $4-x^2 = (2x^3-3x)^2 + 2x^2 + 4(1-x^2)^3  \in Q(g)$. However, it can be checked that $p \in P(K) \setminus Q(g)$, see \cite{S96} and the developments in \cite[Section 2.4]{BS24}. Yet, with the choice $$s(x):=\tfrac23+\tfrac13 g(x) = 1-x^2 + x^4 -\tfrac13 x^6$$ it holds $$\Pi_2(s)=p.$$
\end{example}

\begin{example}\label{ex:prestel}
	Let $g(x):=(x_1-\tfrac12,\:x_2-\tfrac12,\:1-x_1x_2)$ and $p(x):=\tfrac{17}{4}-x^2_1-x^2_2$. The set $K$ is compact, but it can be checked that $Q(g)$ is not Archimedean -- see \cite[Ex. 6.3.1]{PD01} and \cite[Ex 6.5]{N08} -- and hence $p \in P(K) \setminus Q(g)$. Yet, with the choice $$s(x):=\tfrac{17}{4}+2x^2_1\, g_1(x) + 2x^2_2\, g_2(x) = \tfrac{17}{4}-x^2_1-x^2_2 + 2x^3_1 + 2x^3_2 \in Q(g)$$
	it holds $$\Pi_2(s) = p.$$
\end{example}

\begin{example}
	Let $g_1(x):=x^3$, $g_2(x):=1-x^2$ and $p(x):=x$. Therefore $K=[0,1]$ and the quadratic module $Q(g)$ is Archimedean since it trivially contains $1-x^2=g_2$. However, $p \in P(K) \setminus Q(g)$.
To prove this by contradiction, assume that there exist SOS polynomials $s_0,s_1,s_2$ such that $x = s_0(x) + s_1(x)\,x^3 + s_2(x)\,(1-x^2).$ At $x=0$ it yields
$0 = s_0(0) + s_2(0)$, and since $s_0$ and $s_2$ are SOS, this implies $s_0(0)=s_2(0)=0$. It follows that $s_0(x) = x^2 a(x)$ and $s_2(x) = x^2 b(x)$ with $a,b \in \Sigma[x]$. The SOS decomposition becomes  $x = x^2 a(x) + x^3 s_1(x) + (1-x^2)\,x^2 b(x)
= x^2(a(x) + x s_1(x) + (1-x^2)b(x)).$
Thus the right-hand side is divisible by $x^2$, whereas the left-hand side $x$ is not divisible by $x^2$,
a contradiction. Therefore no SOS representation exists, and $x\notin Q(g)$.

However, for any $\varepsilon\ge \tfrac12$, letting $\delta:=\sqrt{4\varepsilon^2-1}$, we have the simple SOS decomposition
\[
x+\varepsilon =  
\underbrace{\frac{x^2(x+\delta)^2}{4\varepsilon}}_{\displaystyle s_0(x)\in\Sigma[x]}
+
\underbrace{\frac{2(2\varepsilon-\delta)}{4\varepsilon}}_{\displaystyle s_1(x)\in\Sigma[x]}\,x^3
+
\underbrace{\frac{(x+2\varepsilon)^2}{4\varepsilon}}_{\displaystyle s_2(x)\in\Sigma[x]}\,(1-x^2).\]
Putinar's Theorem   guarantees that a SOS decomposition exists for every value of $\varepsilon>0$, but it becomes more complicated (higher degree SOS and higher algebraic degree in $\varepsilon$) when $\varepsilon\to0$.	

	Although $p\notin Q(g)$, we can approximate $p$ arbitrarily well by  first degree truncations of elements of $Q(g)$.
	For any $\varepsilon>0$, let
	\[
	s_\varepsilon(x):=(\varepsilon+\frac{x}{2\varepsilon})^2 = \varepsilon^2\;+\;x\;+\;\frac{1}{4\varepsilon^2}\,x^2 \ \in\ \Sigma\ \subset\ Q(g)
	\]
	and hence
	\[
	\Pi_1(s_\varepsilon)=\varepsilon^2+x.
	\]
	Therefore, for any Carleman measure $\mu$ with $1\in L^2(\mu)$,
	\[
	\big\|p-\Pi_1(s_\varepsilon)\big\|_{L^2(\mu)}
	=\|\varepsilon^2\|_{L^2(\mu)}
	=\varepsilon^2\,\|1\|_{L^2(\mu)}
	\;\xrightarrow[\varepsilon\downarrow0]{}\;0.
	\]
	Note however that an exact identity $\Pi_1(s)=x$ with $s\in Q(g)$ is impossible.
	Indeed, write $s=s_0+x^3s_1+(1-x^2)s_2$ with $s_i\in\Sigma$.
	Only $s_0$ and $(1-x^2)s_2$ contribute to degrees zero and one.
	Let $a_0$ (resp.\ $b_0$) be the constant coefficient of the polynomial whose square is $s_0$ (resp.\ $s_2$).
	Then the constant term of $\Pi_1(s)$ equals $a_0^2+b_0^2\ge0$.
	If $\Pi_1(s)$ had zero constant term (as $x$ does), we would need $a_0=b_0=0$, and for any polynomial $r$, $r(0)=0$ implies $(r^2)'(0)=2\,r(0)\,r'(0)=0,$
	so both linear contributions from $s_0$ and $(1-x^2)s_2$ would vanish, forcing the linear coefficient of $\Pi_1(s)$ to be $0$, a contradiction.
	Thus $\Pi_1(s)=x$ cannot hold, although $\Pi_1(s_\varepsilon)\to x$ in $L^2(\mu)$.
\end{example}

\section{A regularized moment-SOS hierarchy}\label{sec:regmomsos}

Given a polynomial $p \in \R[x]$ and a basic semialgebraic set $K$ as in \eqref{eq:set}, we are consider the polynomial optimization problem (POP)
\[
p^*:=\inf_{x\in K} p(x).
\]

\begin{assumption}[Full-dimensional reference measure]\label{ass:refmeas-full}
The set $K$ has non-empty interior and the probability measure $\mu$ satisfies Carleman and it is absolutely continuous w.r.t. the Lebesgue measure with a continuous density $w$ such that there exists an open neighborhood $U\supset K$ and a constant $w_m$ such that $w(x) \geq w_m > 0$ for all $x\in U$.
\end{assumption}

Assumption \ref{ass:refmeas-full} implies that (i) no nonzero polynomial vanishes $\mu$-a.e.\ on $K$; 
(ii) every nonempty relatively open subset $V\subset K$ satisfies $\mu(V)\ge w_m\,|V|>0$; 
(iii) $Q(g)$ is dense in $P(K)$ in $L^2(\mu)$, see Theorem~\ref{thm:denseqm}.

\subsection{Dual SOS problem}

Let $d_j:=\lceil\deg g_j/2\rceil$ for $j=0,1,\ldots,m$.
Given a relaxation degree $d$ and a parameter  $\varepsilon_d>0$, consider the SOS optimization problem
\begin{equation}\label{eq:sos}
	\begin{aligned}
		v^*_d(\varepsilon_d)\ :=\ \sup\ \ & v \\[2pt]
		\text{\rm s.t.}\quad &
		p-v = r  + \sum_{j=0}^m s_j g_j,\\
		& \|r \|_{L^2(\mu)}\ \le\  \varepsilon_d
	\end{aligned}
\end{equation}
where the maximization is w.r.t. scalar $v$, residual polynomial $r \in \R[x]_{2d}$ and SOS multipliers $s_j\in\Sigma[x]_{2(d-d_j)}$.
Note that $\|r\|_{L^2(\mu)} = \|r \|_2$ can be readily implemented by a second-order cone constraint on the coefficient vector of $r $ in an orthonormal basis w.r.t. $L^2(\mu)$, recall identity \eqref{eq:normcoef}.

In SOS problem \eqref{eq:sos}, polynomial $p-v -r $ belongs to the truncated quadratic module
\[
Q(g)_d:=\left\{ \sum_{j=0}^m s_j g_j:\ s_j\in\Sigma[x],\ \deg(s_j g_j)\le 2d\right\} \subset P(K).
\]

\subsection{Primal pseudo-moment problem}

Introduce the Riesz functional $\ell_y:\R[x]_{2d}\to\R$, $\ell_y(q):=\sum_{|\alpha|\le2d}q_\alpha\,y_\alpha$. 
In the primal problem \eqref{eq:sos} write each SOS in Gram form $s_j(x) = b^T_{d-d_j}(x) Q_j b_{d-d_j}(x)$
with $Q_j \succeq 0$.
The Lagrangian  with dual variable $y$ acting on the identity
$p-v_d=r +\sum_j s_j g_j$ reads
\[
L(v_d,r,(s_j),y):=v_d+\ell_y (p-v_d-r -\sum_{j=0}^m s_j g_j ),
\]
together with conic indicators for $Q_j\succeq0$ and  $\|r \|_2\le  \varepsilon_d$.
Maximizing $L$ over $v_d$ forces $\ell_y(1)=1$.
Maximizing over the Gram matrices gives the standard moment and localizing matrix constraints
$M_{d-d_j}(g_j\,y)\succeq0$.
Maximizing over $r_d$ against the Euclidean ball $\|r \|_2\le\varepsilon_d$ produces the support function
$\varepsilon_d   \|y\|_2$.
Thus the primal problem reads:
\begin{equation}\label{eq:mom}
\begin{array}{lll}
			u^*_d(\varepsilon_d) := & \inf &   \ell_y(p)\;+\; \varepsilon_d  \|y\|_2 \\[2pt]
	& \text{\rm s.t.}  & \ell_y(1)=1,\\
	&&   M_{d-d_j}(g_j\,y)\succeq0,\quad j=0,1,\dots,m
\end{array}
\end{equation}
where the minimization is w.r.t. the vector of pseudo-moments $(y_{\alpha})_{|\alpha|\leq 2d}$.
Alternatively, it can be formulated with a second-order cone constraint
\[
	\begin{array}{ll}
 \inf &   \ell_y(p)\;+\;  \varepsilon_d  t  \\[2pt]
		\text{\rm s.t.}  & \ell_y(1)=1,\\
		&   M_{d-d_j}(g_j\,y)\succeq0,\quad j=0,1,\dots,m \\ 
		& \|y\|_2 \leq t.	
	\end{array}
\]
Weak duality ($u^*_d\geq v^*_d$) follows readily between dual SOS problem \eqref{eq:sos} and primal moment problem \eqref{eq:mom}. Strong duality ($u^*_d = v^*_d$) follows from Slater conditions:
\begin{itemize}
	\item there exists a pseudo-moment vector $y$ such that
	$\ell_y(1)=1$, $M_{d-d_j}(g_j\,y)\succ0$, $j=0,1,\ldots,m$, $\|y\|_2 < t$.
	A concrete sufficient condition is the existence of a strict feasible point in $K$.
	Then, for any small ball $B$ around an interior point of $K$, the moments of the normalized Lebesgue measure on $B$ will be strictly feasible.
	\item there exist $(v,r,(s_j))$ such that
	$\|r\|_{L^2(\mu)}< \varepsilon_d$, $s_j\in\Sigma[x]_{2(d-d_j)}\ \text{with Gram matrices }Q_j\succ0$.
	A practical way to enforce strict primal feasibility (for large $d$) is:
	pick any $\delta>0$, use the $L^2(\mu)$-density of $Q(g)$ in $P(K)$ to find
	$q\in Q(g)$ with $\|p-(p^*-\delta)-q\|_{L^2(\mu)}< \varepsilon_d $, let $r:=p-(p^*-\delta)-q$, and then perturb the SOS Gram
	matrices by a tiny $\epsilon I$ (absorbing the change into $r$) so that
	$Q_j\succ0$ while keeping $\|r\|_{L^2(\mu)}< \varepsilon_d$.
\end{itemize}

\subsection{Convergence}

\begin{theorem}\label{thm:L2-conv}
	Let Assumption~\ref{ass:refmeas-full} hold. 
	If $\varepsilon_d\downarrow0$, then $v_d^\star(\varepsilon_d)\to p^\star$ as $d\to\infty$.
\end{theorem}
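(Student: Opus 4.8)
The plan is to prove two inequalities: $\limsup_{d\to\infty} v_d^\star(\varepsilon_d)\le p^\star$ and $\liminf_{d\to\infty} v_d^\star(\varepsilon_d)\ge p^\star$. For the upper bound, I would work directly with the SOS problem \eqref{eq:sos}: if $(v,r,(s_j))$ is feasible, then $p-v=r+\sum_j s_j g_j$ with $\|r\|_{L^2(\mu)}\le\varepsilon_d$. Since $\sum_j s_j g_j\in P(K)$, we have $p(x)-v\ge r(x)$ on $K$. Integrating against $\mu$ restricted to a small ball $B\subset K$ (which has positive mass by Assumption~\ref{ass:refmeas-full}(ii)) and applying Cauchy--Schwarz to control $\int_B |r|\,d\mu\le \mu(B)^{1/2}\|r\|_{L^2(\mu)}\le \mu(B)^{1/2}\varepsilon_d$, one gets $v\le p^\star + (\text{const})\,\varepsilon_d$ after choosing $B$ near a near-minimizer of $p$ on $K$; more carefully, one localizes $B$ around a point where $p$ is within $\eta$ of $p^\star$, yielding $v_d^\star(\varepsilon_d)\le p^\star+\eta+C\varepsilon_d/\mu(B)^{1/2}$, and then lets $d\to\infty$ so $\varepsilon_d\to0$, then $\eta\to0$.

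For the lower bound, which is the substantive direction, I would use the density Theorem~\ref{thm:denseqm}: the polynomial $p-(p^\star-\delta)$ is strictly positive on $K$ (value $\ge\delta>0$ there), hence lies in $P(K)$, so there exists $q\in Q(g)$ with $\|(p-(p^\star-\delta))-q\|_{L^2(\mu)}<\varepsilon$ for any prescribed $\varepsilon>0$. Writing $q=\sum_{j=0}^m s_j g_j$ with $s_j\in\Sigma$, set $r:=(p-(p^\star-\delta))-q$, so that $p-(p^\star-\delta)=r+\sum_j s_j g_j$ with $\|r\|_{L^2(\mu)}<\varepsilon$. The issue is that $q$ lives in $Q(g)$, not necessarily in the truncation $Q(g)_d$: the degrees of $s_j g_j$ may exceed $2d$. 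To fix this, I would split $q=\Pi_{2d}q+(q-\Pi_{2d}q)$ and absorb the high-degree tail $q-\Pi_{2d}q$ into the residual. But $\Pi_{2d}q$ need not lie in $Q(g)_d$ either. The cleanest route: since $q$ has fixed degree once $\delta,\varepsilon$ are fixed, simply take $d$ large enough that $2d\ge\deg q$ and $2d\ge\deg(s_jg_j)$ for all $j$; then $q\in Q(g)_d$ outright, and the triple $(p^\star-\delta,\,r,\,(s_j))$ is feasible for \eqref{eq:sos} provided $\|r\|_{L^2(\mu)}\le\varepsilon_d$. Choosing $\varepsilon$ slightly below the eventual $\varepsilon_d$ (possible since $\varepsilon_d$ is a fixed positive number for each $d$, and we pick $d$ after fixing $\delta$ but we need $\varepsilon_d\ge\varepsilon$ — this requires care) gives $v_d^\star(\varepsilon_d)\ge p^\star-\delta$.

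The delicate point in the lower bound is the order of quantifiers: $\varepsilon_d$ is prescribed and tends to $0$, so for the approximation step I cannot demand $\varepsilon$ be as small as I like independently of $d$ — rather, given $\delta>0$, I fix a polynomial $q_\delta\in Q(g)$ with $r_\delta:=(p-(p^\star-\delta))-q_\delta$ having some fixed norm $\|r_\delta\|_{L^2(\mu)}=:\eta_\delta$, which I may take as small as I wish but which is then frozen. Then for all $d$ large enough that simultaneously $2d\ge\deg q_\delta$ and $\varepsilon_d\ge\eta_\delta$, the triple is feasible and $v_d^\star(\varepsilon_d)\ge p^\star-\delta$. Since $\varepsilon_d\downarrow0$, the condition $\varepsilon_d\ge\eta_\delta$ eventually fails — so instead I must choose $\eta_\delta$ adaptively: for each target $\delta$, first note $p-(p^\star-\delta)\in P(K)$ is fixed; the density theorem lets $\eta_\delta$ be arbitrarily small, in particular I can arrange $\eta_\delta\le \varepsilon_{d(\delta)}$ for a suitable threshold degree $d(\delta)$, and for all $d\ge d(\delta)$ we still have $2d\ge\deg q_\delta$ but may have $\varepsilon_d<\eta_\delta$. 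The resolution is to re-approximate at each scale: the key estimate is that for \emph{any} $d$ with $\varepsilon_d>0$, density gives $q\in Q(g)$ (degree depending on $d$) with residual norm $<\varepsilon_d$, yielding feasibility and $v_d^\star(\varepsilon_d)\ge p^\star-\delta$ as soon as $d$ is large enough that such $q$ can be found with $\deg(s_jg_j)\le 2d$; and a quantitative form of Theorem~\ref{thm:denseqm} — or just its statement applied with $\varepsilon:=\varepsilon_d$ — shows the needed degree grows with $1/\varepsilon_d$ but is finite for each $d$, so the conclusion follows by first sending $d\to\infty$ (hence $\varepsilon_d\to0$) and then $\delta\to0$. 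I expect reconciling these quantifiers — making the degree needed for the $\varepsilon_d$-approximation compatible with the relaxation degree $d$ — to be the main obstacle, and I would handle it by invoking the density theorem with tolerance exactly $\varepsilon_d$ and absorbing any degree-$2d$ truncation error of the resulting $q$ into $r$, using that $\|\Pi_{2d}q-q\|_{L^2(\mu)}\to0$ as $d\to\infty$ for the fixed limiting approximant, combined with a diagonal argument over $\delta\to0$.
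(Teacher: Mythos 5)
Your upper-bound argument is correct and, up to presentation, the same as the paper's: the paper shows directly that $\operatorname{dist}_{L^2(\mu)}(p-v,Q(g)_d)\ge\gamma\sqrt{w_m|V|}>0$ for each fixed $v>p^\star$ (using that $p-v\le-\gamma$ on a relatively open $V\subset K$ of positive Lebesgue measure, where the density $w\ge w_m$), so $v$ is infeasible once $\varepsilon_d$ drops below that distance. Your version integrates $p-v\ge r$ over a small ball and applies Cauchy--Schwarz, reaching the same conclusion.

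The lower bound is where you, rightly, sense trouble, but your proposed resolutions do not close the gap. Feasibility of $v=p^\star-\eta$ at level $d$ is exactly the statement $\operatorname{dist}_{L^2(\mu)}(p_\eta,Q(g)_d)\le\varepsilon_d$. The density result (Theorem~\ref{thm:denseqm}) only yields that $\operatorname{dist}_{L^2(\mu)}(p_\eta,Q(g)_d)\to 0$ as $d\to\infty$; it supplies no rate. Since $\varepsilon_d$ is a prescribed sequence also tending to $0$, nothing in the hypotheses guarantees that $\operatorname{dist}_{L^2(\mu)}(p_\eta,Q(g)_d)\le\varepsilon_d$ for all large $d$: two null sequences need not eventually order themselves. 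Your two fixes are both circular. The truncation $\Pi_{2d}q$ need not lie in $Q(g)_d$ (you note this yourself), so ``absorbing the tail into $r$'' is not available. And invoking density with tolerance $\varepsilon:=\varepsilon_d$ produces a $q\in Q(g)$ whose degree is unconstrained and may exceed $2d$; claiming ``the needed degree is finite for each $d$'' does not make it $\le 2d$. In effect you are assuming a quantitative version of Theorem~\ref{thm:denseqm} (a degree bound as a function of the tolerance) that is never established. This is the missing ingredient, and it is the same step the paper's own proof asserts without justification (``for large enough $d$, there exists $q\in Q(g)_d$ with $\|p_\eta-q\|_{L^2(\mu)}\le\varepsilon_d$''). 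As written, the lower-limit argument only goes through if $\varepsilon_d$ is assumed to decay no faster than the problem-dependent approximation rate $d\mapsto\operatorname{dist}_{L^2(\mu)}(p_\eta,Q(g)_d)$, a hypothesis that neither you nor the paper states.
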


\begin{proof}
	\emph{Upper limit.}
	Fix $v>p^\star$. Then $r:=p-v$ is strictly negative at some point of $K$, hence by continuity there exist 
	$\gamma>0$ and a relatively open nonempty $V\subset K$ such that $r(x)\le-\gamma$ on $V$.
	For any $q\in Q(g)$ we have $q\ge0$ on $K$, so $r-q\le-\gamma$ on $V$. Using $w\ge w_m$ on $K$,
	\[
	\|r-q\|_{L^2(\mu)}^2
	=\int_K |r-q|^2\,w\,dx
	\;\ge\;\int_V \gamma^2\,w\,dx
	\;\ge\; \gamma^2 w_m\,|V|
	\;=:\;\delta(v)^2\;>\;0.
	\]
	In particular $\operatorname{dist}_{L^2(\mu)}(p-v,Q(g)_d)\ge\delta(v)$ for every $d$.
	If $\varepsilon_d<\delta(v)$ then $v$ is infeasible in \eqref{eq:sos}. Since $\varepsilon_d\to0$,
	we obtain $\limsup_{d\to\infty} v_d^\star\le p^\star$.
	
	\emph{Lower limit.}
	Fix $\eta>0$ and set $p_\eta:=p-(p^\star-\eta)\in P(K)$.
	By the $L^2(\mu)$-density of $Q(g)$ in $P(K)$, for large enough $d$, there exists $q\in Q(g)_d$ with
	$\|p_\eta-q\|_{L^2(\mu)}\le\varepsilon_d$.
	Let $v:=p^\star-\eta$ and $r:=p_\eta-q$; then $p-v=r+q$ and $\|r\|_{L^2(\mu)}\le\varepsilon_d$, so $v$ is feasible in \eqref{eq:sos} and $v_d^\star\ge p^\star-\eta$.
	Let $d\to\infty$ and then $\eta\downarrow0$ to get $\liminf_{d\to\infty} v_d^\star\ge p^\star$.
\end{proof}

\begin{assumption}\label{ass:BM}
	Assume that $(K,\mu)$ satisfies a Bernstein-Markov (BM) inequality at degree $2d$, i.e.,
	there exist a constant $c_{2d}\ge1$ such that for all $r\in\R[x]_{2d}$,
	\begin{equation}\label{eq:bm}
		\|r\|_{L^\infty(K)}\ \le\ c_{2d}\,\|r\|_{L^2(\mu)},
	\end{equation}
	see e.g. \cite[Section 4.3.3]{LPP22}.
\end{assumption}

\begin{corollary}\label{cor:BM-lb}
Under Assumptions \ref{ass:refmeas-full} and \ref{ass:BM}, every optimal solution of \eqref{eq:sos} yields a certified lower bound
	\[
	p_d^*(\varepsilon_d) :=\ v_d^\star - c_{2d}\,\varepsilon_d\ \le\ p^\star.
	\]
	Moreover, if $c_{2d}\,\varepsilon_d\to0$, then $p_d^*(\varepsilon_d)\to p^\star$.
\end{corollary}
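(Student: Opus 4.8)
The plan is to combine the Bernstein--Markov inequality with the feasibility structure of problem \eqref{eq:sos}. First I would take an optimal triple $(v_d^\star, r, (s_j))$ for \eqref{eq:sos}, so that $p - v_d^\star = r + \sum_{j=0}^m s_j g_j$ with $\sum_j s_j g_j \in Q(g)_d \subset P(K)$ and $\|r\|_{L^2(\mu)} \le \varepsilon_d$. The BM inequality \eqref{eq:bm} applied to $r \in \R[x]_{2d}$ gives $\|r\|_{L^\infty(K)} \le c_{2d}\,\varepsilon_d$, so in particular $r(x) \ge -c_{2d}\,\varepsilon_d$ for every $x \in K$. Then for any $x \in K$,
\[
p(x) - v_d^\star \;=\; r(x) + \sum_{j=0}^m s_j(x) g_j(x) \;\ge\; r(x) \;\ge\; -c_{2d}\,\varepsilon_d,
\]
since each $s_j g_j \ge 0$ on $K$. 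Taking the infimum over $x \in K$ yields $p^\star - v_d^\star \ge -c_{2d}\,\varepsilon_d$, i.e. $p_d^*(\varepsilon_d) = v_d^\star - c_{2d}\,\varepsilon_d \le p^\star$, which is the claimed certified lower bound.

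For the convergence assertion, I would start from the two-sided estimate just obtained together with Theorem~\ref{thm:L2-conv}. The upper bound part of that theorem's proof shows $\limsup_{d\to\infty} v_d^\star \le p^\star$; combined with $p_d^*(\varepsilon_d) \le p^\star$ and the identity $p_d^*(\varepsilon_d) = v_d^\star - c_{2d}\varepsilon_d$, we get
\[
v_d^\star - c_{2d}\,\varepsilon_d \;\le\; p^\star \;\quad\text{and}\quad\; \liminf_{d\to\infty}\bigl(v_d^\star - c_{2d}\,\varepsilon_d\bigr) \;\ge\; \liminf_{d\to\infty} v_d^\star - \limsup_{d\to\infty} c_{2d}\,\varepsilon_d \;=\; p^\star - 0,
\]
using $v_d^\star \to p^\star$ (Theorem~\ref{thm:L2-conv}, valid since $\varepsilon_d \downarrow 0$ whenever $c_{2d}\varepsilon_d \to 0$ and $c_{2d}\ge 1$) and the hypothesis $c_{2d}\,\varepsilon_d \to 0$. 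Hence $p_d^*(\varepsilon_d) \to p^\star$.

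I do not anticipate a genuine obstacle here: the corollary is essentially a bookkeeping consequence of the sup-norm control that BM buys us on the residual, plus the nonnegativity of the quadratic-module part on $K$. The one point requiring a little care is making sure the hypotheses of Theorem~\ref{thm:L2-conv} are actually in force --- namely that $c_{2d}\varepsilon_d \to 0$ with $c_{2d}\ge 1$ forces $\varepsilon_d \downarrow 0$ (or at least $\varepsilon_d \to 0$, which is all the proof of Theorem~\ref{thm:L2-conv} uses), so that $v_d^\star \to p^\star$ may be invoked. A secondary subtlety is that $v_d^\star$ should be attained (or else one argues with a near-optimal triple and lets the optimality gap tend to zero), but the Slater conditions discussed before the corollary guarantee strong duality and attainment, so this is not a real difficulty.
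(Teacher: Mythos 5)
Your proof is correct and follows essentially the same route as the paper: the same chain of pointwise inequalities $p(x)\ge v+r(x)\ge v-\|r\|_{L^\infty(K)}\ge v-c_{2d}\varepsilon_d$ combined with nonnegativity of $\sum_j s_j g_j$ on $K$, then infimum over $x\in K$ and Theorem~\ref{thm:L2-conv} for the convergence. The only cosmetic difference is that the paper works with an arbitrary feasible triple and takes the supremum over feasible $v$ at the end, thereby sidestepping the attainment question you flag (and correctly resolve via Slater/strong duality); your extra remark that $c_{2d}\ge1$ together with $c_{2d}\varepsilon_d\to0$ forces $\varepsilon_d\to0$, so that Theorem~\ref{thm:L2-conv} applies, is a small but genuine refinement the paper leaves implicit.
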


\begin{proof} 
	For any feasible $(v,r,(s_j))$ in \eqref{eq:sos} and all $x\in K$,
	\[
	p(x)=v+r(x)+\sum_j s_j(x)g_j(x)\ \ge\ v+r(x)\ \ge\ v-\|r\|_{L^\infty(K)}
	\ \ge\ v-c_{2d}\|r\|_{L^2(\mu)}\ \ge\ v-c_{2d}\varepsilon_d.
	\]
	Taking the infimum over $x\in K$ and then the supremum over feasible $v$ gives the bound.
	If $c_{2d}\varepsilon_d\to0$, then $p_d^*\uparrow p^\star$ by Theorem~\ref{thm:L2-conv}.
\end{proof}

\begin{remark} 
	Let $\{b_\alpha\}_{|\alpha|\le 2d}$ be an $L^2(\mu)$ orthonormal polynomial basis and
	define the degree $2d$ Christoffel-Darboux polynomial
	\[ 
		p^{\mu}_{2d}(x):=\sum_{|\alpha|\le 2d}b_\alpha(x)^2.
	\]
	Then the tighest BM constant in \eqref{eq:bm} is $$c_{2d}=\sup_{x\in K}\sqrt{p^{\mu}_{2d}(x)},$$
	see e.g. \cite[Section 4.3.3]{LPP22}.
\end{remark}

\subsection{Lower bounds with penalized formulation}
\label{subsec:l2-penalized-bm-envelope}

Let us now describe an equivalent reformulation of regularized problems \eqref{eq:sos} and \eqref{eq:mom} that does not involve the parameter $\epsilon_d$. Consider the $\ell_2$ penalized formulation:
\begin{equation}\label{eq:pensos}
	\begin{aligned}
			\hat v_d^* \;:=\;
	\sup\ & v - c_{2d}\,\|r\|_{L^2(\mu)}  \\[2pt]
	\text{\rm s.t.}\quad &
	p-v = r  + \sum_{j=0}^m s_j g_j
\end{aligned}
\end{equation}
where the maximization is w.r.t. scalar $v$,
residual polynomial $r \in \R[x]_{2d}$ and SOS multipliers $s_j\in\Sigma[x]_{2(d-d_j)}$. 
As in the proof of Corollary \ref{cor:BM-lb}, any feasible tuple certifies on $K$ the pointwise lower bound
\(
p(x)\ge v-c_{2d}\|r\|_2
\)
for $x \in K$, so $\hat v^*_d\le\min_{x\in K}p(x)$ is a valid lower bound.
SOS problem \eqref{eq:pensos} is the dual to the primal moment problem
\begin{equation}\label{eq:penmom}
	\begin{array}{lll}
		 \hat u^*_d := & \inf &   \ell_y(p)\\[2pt]
		& \text{\rm s.t.}  & \ell_y(1)=1,\\
		&&   M_{d-d_j}(g_j\,y)\succeq0,\quad j=0,1,\dots,m \\
		&& \|y\|_2 \leq c_{2d}. 
	\end{array}
\end{equation}
As above, strong duality ($\hat u^*_d = \hat v^*_d$) follows from Slater conditions.
The next result shows that no tuning of $\varepsilon$ is needed: any optimal solution $v^*, r^*$ of \eqref{eq:pensos} both selects the optimal $\varepsilon_d=c_{2d}\|r^*\|_2$ and returns the tightest BM certified bound $p_d^*=v^*-c_{2d}\|r^*\|_2$.

\begin{lemma}\label{lem:penbound}
It holds
	\begin{equation}\label{eq:bm-envelope-general}
		\hat v^*_d\;=\;\sup_{\varepsilon\ge0}\ (v_d^*(\varepsilon)\;-\;c_{2d}\,\varepsilon).
	\end{equation}
	\end{lemma}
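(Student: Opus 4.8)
The plan is to establish the identity \eqref{eq:bm-envelope-general} by proving the two inequalities separately, viewing both sides as optima over related feasible sets. The key observation is that problem \eqref{eq:sos} at parameter $\varepsilon$ has the same linear description $p - v = r + \sum_j s_j g_j$ as problem \eqref{eq:pensos}, the only difference being that \eqref{eq:sos} imposes the hard constraint $\|r\|_{L^2(\mu)} \le \varepsilon$ whereas \eqref{eq:pensos} moves $c_{2d}\|r\|_{L^2(\mu)}$ into the objective. So the natural strategy is a Lagrangian-style exchange between a constraint and a penalty.

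First I would prove $\hat v^*_d \ge \sup_{\varepsilon \ge 0}(v_d^*(\varepsilon) - c_{2d}\varepsilon)$. Fix $\varepsilon \ge 0$ and let $(v, r, (s_j))$ be any feasible tuple for \eqref{eq:sos} at parameter $\varepsilon$, so $p - v = r + \sum_j s_j g_j$ and $\|r\|_{L^2(\mu)} \le \varepsilon$. The very same tuple is feasible for \eqref{eq:pensos} (which has no constraint on $r$), and its objective value there is $v - c_{2d}\|r\|_{L^2(\mu)} \ge v - c_{2d}\varepsilon$. Taking the supremum over feasible tuples at fixed $\varepsilon$ gives $\hat v^*_d \ge v_d^*(\varepsilon) - c_{2d}\varepsilon$; since $\varepsilon$ was arbitrary, take the supremum over $\varepsilon \ge 0$. (A small point: one should check the supremum in \eqref{eq:sos} defining $v_d^*(\varepsilon)$ is approached by genuine feasible tuples, so that the inequality survives passing to the sup; this is immediate from the definition of $v_d^*(\varepsilon)$ as a supremum.)

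For the reverse inequality $\hat v^*_d \le \sup_{\varepsilon \ge 0}(v_d^*(\varepsilon) - c_{2d}\varepsilon)$, take any feasible tuple $(v, r, (s_j))$ for \eqref{eq:pensos} and set $\varepsilon := \|r\|_{L^2(\mu)} \ge 0$. Then this tuple is feasible for \eqref{eq:sos} at this particular $\varepsilon$ (the constraint $\|r\|_{L^2(\mu)} \le \varepsilon$ holds with equality), so $v \le v_d^*(\varepsilon)$, whence the penalized objective satisfies $v - c_{2d}\|r\|_{L^2(\mu)} = v - c_{2d}\varepsilon \le v_d^*(\varepsilon) - c_{2d}\varepsilon \le \sup_{\varepsilon'\ge 0}(v_d^*(\varepsilon') - c_{2d}\varepsilon')$. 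Taking the supremum over feasible tuples for \eqref{eq:pensos} yields the claim. Combining the two inequalities proves \eqref{eq:bm-envelope-general}.

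I do not expect a serious obstacle here: the result is essentially a tautological $\inf$-$\sup$ exchange between a constrained and a penalized form of the same problem, and both directions reduce to noting that a feasible tuple for one problem is feasible for the other with an appropriately chosen $\varepsilon$. The only mild subtlety is bookkeeping with suprema that may not be attained (e.g. if $v_d^*(\varepsilon) = +\infty$ for some $\varepsilon$, or if the optimum in \eqref{eq:pensos} is not attained); this is handled cleanly by always arguing at the level of individual feasible tuples and passing to suprema only at the end, exactly as above. One could also remark in passing that, since every feasible tuple for \eqref{eq:pensos} with $\varepsilon = \|r\|_{L^2(\mu)}$ shows the penalized problem is automatically selecting the best trade-off, an optimal $(v^*, r^*)$ for \eqref{eq:pensos} realizes the outer supremum at $\varepsilon_d = \|r^*\|_{L^2(\mu)}$ and returns exactly the certified bound $p_d^* = v^* - c_{2d}\|r^*\|_{L^2(\mu)}$, as announced before the lemma.
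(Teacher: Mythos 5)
Your proof is correct and takes essentially the same approach as the paper: both rest on the observation that $v - c_{2d}\|r\|_{L^2(\mu)}$ equals the supremum over $\varepsilon \ge 0$ of $v - c_{2d}\varepsilon$ subject to $\|r\|_{L^2(\mu)} \le \varepsilon$, combined with an exchange of suprema over the common feasible decompositions $p - v = r + \sum_j s_j g_j$. The paper phrases this as a single interchange of nested suprema, whereas you split it into two inequalities arguing tuple by tuple, but the underlying mechanism is identical.
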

	\begin{proof}
		Let $\mathcal{F}_d$ be the degree-$2d$ SOS–feasible set of triples $(v,r,(s_j))$ with
		$p-v=r+\sum_{j=0}^m s_j g_j$, $r\in\R[x]_{2d}$, $s_j\in\Sigma[x]_{2(d-d_j)}$.
		By definition,
		\[
		\hat v_d^*=\sup_{(v,r,(s_j))\in\mathcal{F}_d}\Big\{\,v-c_{2d}\,\|r\|_{L^2(\mu)}\,\Big\}.
		\]
		For each fixed $r$,
		\[
		v-c_{2d}\,\|r\|_{L^2(\mu)}
		=\sup_{\varepsilon\ge0}\ \Big\{\,v-c_{2d}\,\varepsilon\ :\ \|r\|_{L^2(\mu)}\le \varepsilon\,\Big\},
		\]
		with the supremum attained at $\varepsilon=\|r\|_{L^2(\mu)}$.
		Hence
		\[
		\hat v_d^*=\sup_{(v,r,(s_j))\in\mathcal{F}_d}\ \sup_{\varepsilon\ge0}\ \Big\{\,v-c_{2d}\,\varepsilon\ :\ \|r\|_{L^2(\mu)}\le \varepsilon\,\Big\}.
		\]
		Interchanging the two suprema (over a product set) yields
		\[
		\hat v_d^*=\sup_{\varepsilon\ge0}\ \sup\Big\{\,v-c_{2d}\,\varepsilon\ :\ (v,r,(s_j))\in\mathcal{F}_d,\ \|r\|_{L^2(\mu)}\le \varepsilon\,\Big\}.
		\]
		The inner supremum is precisely $v_d^*(\varepsilon)-c_{2d}\,\varepsilon$ by the definition of $v_d^*(\varepsilon)$, whence
		\[
		\hat v_d^*=\sup_{\varepsilon\ge0}\bigl(v_d^*(\varepsilon)-c_{2d}\,\varepsilon\bigr).
		\]
	\end{proof}
	
\begin{theorem}\label{thm:penconv}
	Under Assumptions \ref{ass:refmeas-full} and \ref{ass:BM},
any optimal solution $v^*, r^*$ of \eqref{eq:pensos} generates a sequence $p^*_d := v^*  - c_{2d} \|r^*\|_2$ of lower bounds which is monotone nondecreasing and converging to $p^*$:
\[
p^*_d \leq p^*_{d+1} \leq \cdots \leq \lim_{d\to\infty} p^*_d = p^*.
\]
\end{theorem}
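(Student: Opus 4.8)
The plan is to establish, for the sequence $p_d^* := v^* - c_{2d}\|r^*\|_2$ produced by an optimizer of the penalized SOS problem \eqref{eq:pensos}, three facts: $p_d^*\le p^*$ for all $d$; $p_d^*\le p_{d+1}^*$; and $p_d^*\to p^*$. Since $(v^*,r^*)$ together with the optimal multipliers is feasible and optimal in \eqref{eq:pensos}, the quantity $p_d^*$ equals the optimal value $\hat v_d^*$ of that problem, so it does not depend on the chosen optimizer and the three claims read $\hat v_d^*\le p^*$, $\hat v_d^*\le \hat v_{d+1}^*$, $\hat v_d^*\to p^*$. For $\hat v_d^*\le p^*$ I would reuse the pointwise certificate of Corollary~\ref{cor:BM-lb}: for any feasible $(v,r,(s_j))$ and every $x\in K$, using $s_jg_j\ge0$ on $K$ and the Bernstein--Markov inequality \eqref{eq:bm} for $r\in\R[x]_{2d}$,
\[
p(x)=v+r(x)+\sum_{j=0}^m s_j(x)g_j(x)\ \ge\ v-\|r\|_{L^\infty(K)}\ \ge\ v-c_{2d}\|r\|_{L^2(\mu)};
\]
taking the infimum over $x\in K$ and then the supremum over feasible tuples gives $\hat v_d^*\le p^*$.

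For convergence I would combine Lemma~\ref{lem:penbound}, which gives $\hat v_d^*=\sup_{\varepsilon\ge0}\bigl(v_d^*(\varepsilon)-c_{2d}\varepsilon\bigr)$, with Theorem~\ref{thm:L2-conv}. Pick any null sequence $\varepsilon_d\downarrow0$ with $c_{2d}\varepsilon_d\to0$, for instance $\varepsilon_d:=1/(d\,c_{2d})$, which is admissible because $c_{2d}\ge1$. Under Assumption~\ref{ass:refmeas-full}, Theorem~\ref{thm:L2-conv} yields $v_d^*(\varepsilon_d)\to p^*$, hence
\[
p^*\ \ge\ \hat v_d^*\ \ge\ v_d^*(\varepsilon_d)-c_{2d}\varepsilon_d\ \xrightarrow[d\to\infty]{}\ p^*,
\]
so $\hat v_d^*\to p^*$, which also settles the limit in the displayed chain.

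The monotonicity $p_d^*\le p_{d+1}^*$ is the step I expect to be the crux, and the approach is degree lifting. A tuple feasible for \eqref{eq:pensos} at relaxation degree $d$ stays feasible at degree $d+1$: the residual $r\in\R[x]_{2d}$ lies in $\R[x]_{2(d+1)}$, each multiplier $s_j\in\Sigma[x]_{2(d-d_j)}$ lies in $\Sigma[x]_{2(d+1-d_j)}$, and the identity $p-v=r+\sum_j s_jg_j$ is unchanged, so the degree-$d$ feasible set embeds into the degree-$(d+1)$ one. Applied to the degree-$d$ optimizer, the lifted tuple is admissible at degree $d+1$ and its residual $r^*$ still lies in $\R[x]_{2d}$, hence still obeys the degree-$2d$ Bernstein--Markov bound $\|r^*\|_{L^\infty(K)}\le c_{2d}\|r^*\|_2$; the lower bound this lifted tuple certifies on $K$ is again $v^*-c_{2d}\|r^*\|_2=p_d^*$, so the best certified bound available at level $d+1$ is at least $p_d^*$. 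The delicate point --- the main obstacle to handle carefully --- is that the penalty coefficient in \eqref{eq:pensos} increases with $d$, because the Christoffel--Darboux kernel $p^\mu_{2d}$, and hence $c_{2d}$, is nondecreasing in the degree; one must verify that re-solving at level $d+1$ does not undo the gain. This is arranged by comparing $\hat v_{d+1}^*$ only against the lifted level-$d$ certificate, whose penalized value it dominates, and by invoking the $L^2(\mu)$-density of $Q(g)$ in $P(K)$ from Theorem~\ref{thm:denseqm} --- the same device used for the lower limit in Theorem~\ref{thm:L2-conv} --- to drive the residual of a near-$p_d^*$ certificate below any prescribed tolerance, so that its penalized value is arbitrarily close to $p_d^*$; together with $\hat v_d^*\le p^*$ this gives the nondecreasing chain converging to $p^*$.
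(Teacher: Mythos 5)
Your treatment of the certified lower bound (part (i)) and of the limit (part (iii)) matches the paper's proof: same pointwise BM estimate giving $\hat v_d^*\le p^*$, same use of Lemma~\ref{lem:penbound} together with Theorem~\ref{thm:L2-conv} and a null sequence with $c_{2d}\varepsilon_d\to0$. The observation that $p_d^*$ equals the optimal value $\hat v_d^*$ and is therefore optimizer-independent is also the right preliminary remark.

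The gap is exactly where you suspect it, in the monotonicity step, and your proposed fix does not close it. The lifted level-$d$ optimizer $(v^*,r^*,(s_j^*))$ is indeed feasible at level $d+1$, but its \emph{penalized} objective value at level $d+1$ is $v^*-c_{2(d+1)}\|r^*\|_2$, which is $\le v^*-c_{2d}\|r^*\|_2=p_d^*$ because $c_{2(d+1)}\ge c_{2d}$ (the Christoffel--Darboux kernel is monotone in the degree). So the inequality $\hat v_{d+1}^*\ge v^*-c_{2(d+1)}\|r^*\|_2$ that this comparison yields only gives $\hat v_{d+1}^*\ge(\text{something}\le p_d^*)$, not $\hat v_{d+1}^*\ge p_d^*$. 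Your sentence ``whose penalized value it dominates'' is therefore vacuous for the purpose at hand. The appeal to the $L^2(\mu)$-density of $Q(g)$ in $P(K)$ is likewise an asymptotic statement (it lets you drive the residual small as $d\to\infty$) and does not compare two \emph{consecutive} finite levels; it proves $\hat v_d^*\to p^*$, which you already have from part (iii), but not $\hat v_d^*\le\hat v_{d+1}^*$. On the moment side \eqref{eq:penmom} the same tension appears: enlarging the norm ball to radius $c_{2(d+1)}$ pushes the infimum down, while adding the higher-order moment and localizing constraints pushes it up, and there is no a priori ordering.

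For the record, the paper's own proof of monotonicity carries the same gap: it invokes only the feasible-set inclusion $\mathcal F_{d'}\subset\mathcal F_d$ and concludes ``so the supremum cannot decrease,'' tacitly treating the objective of \eqref{eq:pensos} as level-independent. Since the penalty weight $c_{2d}$ in the objective is level-dependent, the inclusion of feasible sets alone does not imply $\hat v_{d'}^*\le\hat v_d^*$. You at least flag this as the delicate point; the paper does not. As written, neither argument establishes the monotone chain $p_d^*\le p_{d+1}^*$; to rescue it one would need either an additional structural argument that offsets the growth of $c_{2d}$, or to weaken the conclusion to convergence without monotonicity, or to redefine $p_d^*$ as a running maximum (which trivially restores monotonicity and does not spoil parts (i) and (iii)).
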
	 

\begin{proof}
	Set $p_d^*:=\hat v_d^*$ (the optimal value of \eqref{eq:pensos}); at any optimizer $(v^*,r^*,(s_j^*))$ we have $p_d^*=v^*-c_{2d}\|r^*\|_{L^2(\mu)}$.
	
	\emph{(i) Certified lower bound.}
	By Assumption~\ref{ass:BM}, for any feasible $(v,r,(s_j))$ and all $x\in K$,
	\[
	p(x)=v+r(x)+\sum_j s_j(x)g_j(x)\ \ge\ v+r(x)\ \ge\ v-\|r\|_{L^\infty(K)}\ \ge\ v-c_{2d}\|r\|_{L^2(\mu)}.
	\]
	Taking $\inf_{x\in K}$ and then $\sup$ over feasible tuples gives $p^*\ge \hat v_d^*=p_d^*$.
	Thus each $p_d^*$ is a valid lower bound.
	
	\emph{(ii) Monotonicity.}
	If $d'<d$, then $\hat v_{d}^*\ge \hat v_{d'}^*$. Indeed,
	since $\R[x]_{2d'}\subset \R[x]_{2d}$ and $Q_{d'}(g)\subset Q(g)_d$, the feasible set of \eqref{eq:pensos} at level $d$ contains that at level $d'$, so the supremum cannot decrease.
	Therefore $p_{d}^*=\hat v_d^*$ is nondecreasing in $d$.
	
	\emph{(iii) Convergence.}
	Fix $\eta>0$. Choose a sequence $\{\varepsilon_d\}_{d\ge1}$ with $\varepsilon_d\downarrow 0$ and $c_{2d}\varepsilon_d\downarrow 0$
	(e.g.\ $\varepsilon_d:=\min\{1/d,\ 1/(d\,c_{2d})\}$).
	By Theorem~\ref{thm:L2-conv} under Assumption~\ref{ass:refmeas-full}, there exists $d_0$ such that
	$v_d^*(\varepsilon_d)\ge p^*-\eta/2$ for all $d\ge d_0$.
	Using Lemma~\ref{lem:penbound} and $c_{2d}\varepsilon_d\le \eta/2$ for $d$ large,
	\[
	p_d^*=\hat v_d^*=\sup_{\varepsilon\ge0}\bigl(v_d^*(\varepsilon)-c_{2d}\varepsilon\bigr)
	\ \ge\ v_d^*(\varepsilon_d)-c_{2d}\varepsilon_d
	\ \ge\ p^*-\eta.
	\]
	Combining with (i) gives $p^*\ge p_d^*\ge p^*-\eta$ for all $d\ge d_0$. Since $\eta>0$ was arbitrary and $(p_d^*)_d$ is nondecreasing, we conclude $p_d^*\uparrow p^*$.
\end{proof}

\subsection{Recovering the standard moment-SOS hierarchy}

At level $d$, the standard moment-SOS hierarchy is
\[
\inf\ \{\,\ell_y(p):\ \ell_y(1)=1,\ M_{d-d_j}(g_jy)\succeq0\ \forall j\ \} = 
\sup\ \{\,v:\ p-v \in Q(g)_d\ \}
\]
Setting $\varepsilon_d=0$ in the constrained form \eqref{eq:sos} enforces $\|r\|_{L^2(\mu)}\le0$, hence $r=0$ and $p-v=r+\sum_js_jg_j = \sum_js_jg_j \in Q(g)_d$.
Dually, the objective function in \eqref{eq:mom} becomes
$\ell_y(p)+r \|y\|_2 = \ell_y(p)$.

In the penalized form \((\ref{eq:pensos})\), we introduce a penalty weight $\lambda>0$:
\[
\sup\ \{\,v-\lambda\|r\|_{L^2(\mu)}:\ p-v=r+\textstyle\sum_js_jg_j\ \}
\]
and our choice \(\lambda=c_{2d}\) is the BM certified instance. As $\lambda\uparrow\infty$,
any optimal sequence must satisfy $\|r\|\to0$ (otherwise the objective would go to $-\infty$),
hence $r\to0$. On the moment side, we have
\[
\inf\ \ell_y(p)\quad\text{s.t.}\quad \ell_y(1)=1,\ M_{d-d_j}(g_jy)\succeq0,\ \ \|y\|_2\le\lambda,
\]
and letting $\lambda\uparrow\infty$ removes the norm constraint.

\section{Examples}\label{sec:examples}

In the following examples we used the MOSEK semidefinite solver and we modeled the moment-SOS hierarchy in Python
on a standard PC running under Ubuntu 24.
Numerical values are reported to 5 significant digits.

\subsection{Motzkin}\label{ex:motzkin2}

Let us apply the regularized hierarchy to the Motzkin polynomial
\(p_M(x)=1-3x_1^2x_2^2+x_1^4x_2^2+x_1^2x_2^4\) of Example \ref{ex:motzkin}. Since the problem is unconstrained, we can choose the normalized Gaussian (zero mean unit variance).  
To express the moment matrices and SOS decompositions we use orthonormal Hermite polynomials $b_k$ so that \(\|q\|_{L^2(\mu)}\) is the Euclidean norm of the coefficient vector of \(q\) in the \(\{b_k\}\) basis, recall
\eqref{eq:normcoef}. The Hermite polynomials satisfy $b_0(x)=1$, $b_1(x)=x$ and the three-term recurrence $x b_k(x) = \sqrt{k+1} b_{k+1}(x) + \sqrt{k} b_{k-1}(x)$, $k \geq 1$.

At relaxation order \(d=3\) (degree \(2d=6\)), after solving the moment-SOS problem \eqref{eq:sos}-\eqref{eq:mom} we obtain the following numerical bounds: 
\[
\begin{array}{c|ccccccc}
	\varepsilon_3 & 1 & 10^{-1} & 10^{-2} & 10^{-3} & 10^{-4} & 10^{-5} & 10^{-6} \\
	\hline
	v^*_3 & 1.2472 & -0.68628 & -3.0515 & -10.104 & -32.325 & -102.57 & -324.69
\end{array}
\]
We observe that the bounds follow a square-root law
\(v^*_3(\varepsilon_3)\sim -\kappa_3 \varepsilon^{-1/2}_3\) with $\kappa_3 \approx 0.32452$.
Because \(p_M\notin\Sigma_6\), the best degree-6 SOS approximation sits on a rank-2 face generated by the cubic forms
\(f_{\pm}=x_1^2x_2\pm x_1x_2^2\), with \(s_0=\tfrac12 f_+^2+\tfrac12 f_-^2\).
The feasible SOS variations issuing from \(s_0\) at first order are the tangent directions generated by the two active squares, i.e., linear combinations of \(f_+u\) and \(f_-v\) (with \(u,v\) cubic). Writing \(p_M=s_0+w\), the residual \(w=1-3x_1^2x_2^2\) decomposes into a part that can be created by these first-order SOS variations (tangent image) and a part that  cannot (transverse to that image). The parameter \(\kappa_3\) quantifies, in the Gaussian \(L^2\) metric used by the hierarchy, the size of this transverse component of \(w\) relative to the constant shift direction. Geometrically, it measures how stiff the cone \(\Sigma_6\) is at the face through \(s_0\) against moving along the affine line \(p_M - v\), and it is precisely this second-order stiffness that produces the square-root law. In short, \(\kappa_3\) is the intrinsic curvature-controlled obstruction, at \(s_0\), to absorbing the degree-6 residual of \(p_M\) by first-order SOS deformations.

Raising the relaxation order  to \(d=4\) enriches the active face of the SOS cone by adding a \emph{quartic} square direction. Concretely, besides the rank-2 face through $s_0$
the degree-8 cone \(\Sigma_8\) also contains the square of the quartic
\(g(x) = 1-\tfrac32 x_1^2 x_2^2\), i.e.\ \(g^2\in\Sigma_8\).
This extra square direction lets one cancel, at \(s_0\), not only the tangent (first-order) component of the residual \(w=1-3x_1^2x_2^2\), but also its second-order effect along the enlarged face spanned by the two cubic squares and the quartic square. The first non-removable piece appears at third order in a local SOS expansion. Geometrically, this means the affine line \(p_M-v\) now meets \(\Sigma_8\) with third-order contact at the augmented face, and the remaining transverse obstruction is governed by this third-order curvature. As \(\varepsilon\downarrow0\), this yields the cubic-root law $v^*_4(\varepsilon) \sim -\kappa_4  \varepsilon^{-1/3}$
with a prefactor $\kappa_4 \approx 0.22902$ determined by that third-order stiffness in the Gaussian \(L^2\) metric. Numerically, the bounds behave in line with this picture:
\[
\begin{array}{c|ccccccc}
	\varepsilon_4 & 1 & 10^{-1} & 10^{-2} & 10^{-3} & 10^{-4} & 10^{-5} & 10^{-6} \\
	\hline
	v^*_4 &
	1.2722 & -0.081974 & -0.69080 & -1.9431 & -4.6598 & -10.518 & -23.142
\end{array}
\]

At \(d=5\), the additional degree-5 square directions do not remove the remaining {third-order} transverse component at that face: the affine line \(p_M-v\) continues to have third-order contact with the (now larger) cone, so the same cubic-root law persists: $v_5(\varepsilon)\ \sim\ -\,\kappa_5\,\varepsilon^{-1/3}$  but
with a smaller prefactor $\kappa_5 \approx 0.053771$ reflecting the reduced stiffness of \(\Sigma_{10}\) near the face containing \(s_0\):
\[
\begin{array}{c|ccccccc}
	\varepsilon_5 & 1 & 10^{-1} & 10^{-2} & 10^{-3} & 10^{-4} & 10^{-5} & 10^{-6} \\
	\hline
	v^*_5 &
	\phantom{-}1.4503 & \phantom{-}0.18558 & -0.073781 & -0.45841 & -1.2176 & -2.6123 & -5.1260
\end{array}
\]
At higher relaxation orders $d$ the trend is similar with decreasing prefactors $\kappa_d$.

\subsection{Origin}

Consider Example \ref{ex:origin} with $p(x)=x$ and $g(x)=-x^2$, so that $K=\{0\}$ reduces to the origin, with the obvious solution $x^*=p^*=0$. Let $\mu$ be the normalized Gaussian as in Section \ref{ex:motzkin2}.

Fix a relaxation order \(d\in\mathbb{N}\) and a regularization parameter \(\varepsilon_d>0\).
The (dual) SOS problem is
\[
\max_{v,\;r,\;s_0,\;s_1}\ \{\,v:\ x-v=r+s_0-s_1\,x^2,\ \ s_0\in\Sigma[x]_{2d},\ s_1\in\Sigma[x]_{2(d-1)},\ \ \|r\|_{L^2(\mu)}\le\varepsilon_d\},
\]
and the (primal) pseudo-moment problem reads
\begin{equation}\label{eq:dual-point}
	\min_{y}\ \ell_y(p)+\varepsilon_d\|y\|_2
	\quad\text{s.t.}\quad
	y_0=1,\ \ M_d(y)\succeq0,\ \ M_{d-1}(-x^2\,y)\succeq0,
\end{equation}
where \(y=(y_0,\dots,y_{2d})\) acts by \(\ell_y(q)=\sum_{k=0}^{2d} q_k\,y_k\) on Hermite coefficients \(q_k\), and
\([M_r(y)]_{ij}=\ell_y(b_i b_j)\), \([M_{r-1}(-x^2\,y)]_{ij}=\ell_y(-x^2 b_i b_j)\).
Let $v^*_d$ denote the value of the primal-dual problems.

Now observe that if \(y\) satisfies the constraints in \eqref{eq:dual-point}, then
$\ell_y(x^2q^2)=0$, $\forall q\in\mathbb{R}[x]_{d-1}.$
In particular, in the Hermite basis one has $y_k \;=\; b_k(0)$, $k=0,1,\dots,2d$. That is, \(y\) coincides with the truncated moment sequence of the Dirac mass \(\delta_0\). One has \(p(x)=x=b_1(x)\), hence \(\ell_y(p)=y_1=b_1(0)=0\).
For the Hermite polynomials,
\(
b_{2m+1}(0)=0
\) and
\(
b_{2m}(0)=(-1)^m\sqrt{(2m)!}/(2^m m!)
\).
It follows that the optimal value is
\[
v_d^\star\;=\;\varepsilon_d\,\big\|y\big\|_2
\;=\;
\varepsilon_d\,\sqrt{\sum_{k=0}^{2d} b_k(0)^2}
\;=\; \varepsilon_d\,\sqrt{\displaystyle\sum_{m=0}^{d}\frac{\binom{2m}{m}}{4^m}}.
\]
Solving the corresponding regularized moment-SOS hierarchy gives the following values matching with the analytic values:
\[
\begin{array}{c|ccccccccc}
	d & 2 & 3 & 4 & 5 & 6 & 7 & 8 & 9 & 10\\ \hline
	v_d^*/\varepsilon_d &
	1.3693 & 1.4790 & 1.5687 & 1.6453 & 1.7125 & 1.7726 & 1.8271 & 1.8772 & 1.9236
\end{array}
\]
As \(d\to\infty\), since \(\binom{2m}{m}/4^m\sim (\pi m)^{-1/2}\), $v_d^* \sim \frac{\sqrt{2}}{\pi^{1/4}}\,\varepsilon_d\,d^{1/4}$
and hence if $\varepsilon_d=o(d^{-1/4})$ then  $v_d^\star\to p^*=0$. Note that if the decrease of $\varepsilon_d$ is too slow, e.g. $\varepsilon_d=d^{-1/4}$, then $v^*_d$ does not converge to $p^*$.

In the present case $K=\{0\}$, even if $\mu$ is Gaussian (so $w>0$ everywhere), every relatively open subset $V\subset K$
	is of the form $V=\{0\}$ and thus has Lebesgue measure $|V|=0$ and $\mu(V)=0$.
	Consequently, the key estimate used in the proof of Theorem~\ref{thm:L2-conv},
	$\|r-q\|_{L^2(\mu)}^2 \;\ge\; \int_V |r-q|^2\,w\,dx \;\ge\; \gamma^2\,\mu(V) \;>\;0$
	fails because $\mu(V)=0$ for every nonempty relatively open $V\subset K$.
Thus the upper bound argument (which needs a set of  {positive} $\mu$-measure where $p-v$ is strictly negative)
cannot be carried out on $K=\{0\}$.

On the other hand, the sharp BM constant \eqref{eq:bm} at degree \(2d\) is
\[
c_{2d}\;=\;\sqrt{p^{\mu}_{2d}(0)}
\;=\;\sqrt{\sum_{k=0}^{2d} b_k(0)^2}
\;=\;\sqrt{\sum_{m=0}^{d}\frac{\binom{2m}{m}}{4^m}}.
\]
Therefore the BM certified bound of Corollary~\ref{cor:BM-lb} becomes
\[
p_d^*\ :=\ v_d^* - c_{2d}\,\varepsilon_d
\;=\;
\varepsilon_d\sqrt{\sum_{m=0}^{d}\frac{\binom{2m}{m}}{4^m}}
\ -\
\varepsilon_d\sqrt{\sum_{m=0}^{d}\frac{\binom{2m}{m}}{4^m}}
\;=\;0.
\]
Since the true optimum is \(p^*=0\), we have
$p_d^*=p^*$ for all $d\in\mathbb{N}$, $\varepsilon_d>0$. For this degenerate POP, the BM certificate is exact (tight) at every relaxation order and every regularization level.

\subsection{Stengle}

\begin{figure}[t]
	\centering
	\includegraphics[width=.78\linewidth]{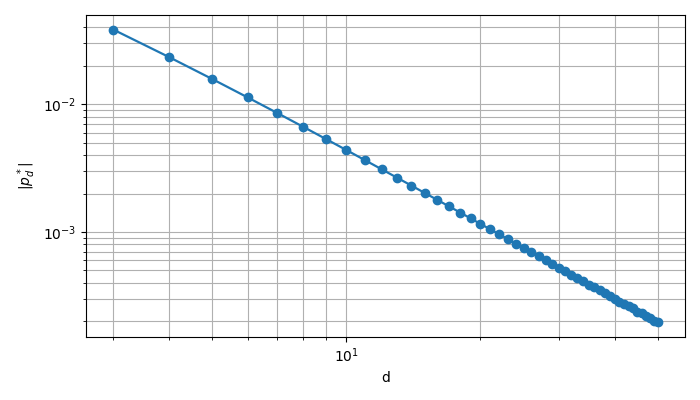}
	\caption{Log-log plot of the gaps \(|p_d^*-p^*|\) versus the relaxation order \(d\)
		for the Stengle POP.
		The near-linearity indicates an algebraic decay \(|p_d^*-p^*|\approx C\,d^{-2}\).
		\label{fig:regmomsos-stengle}}
\end{figure}

As in Example \ref{ex:stengle}, let $p(x)=1-x^2$, and $g(x)=(1-x^2)^3$ so that $K=[-1,1]$. If $\mu$ is the arcsine measure, then its orthonormal basis are Chebyshev polynomials $b_0=T_0$, $b_k=\sqrt{2}\,T_k$ for $k\ge1$.
The corresponding BM constant is
\[
c_{2d}\;=\;\sup_{x\in K}\sqrt{\sum_{k=0}^{2d} b_k(x)^2} \;=\;\sqrt{4d+1}.
\]
Solving the penalized mom-SOS hierarchy \eqref{eq:pensos}-\eqref{eq:penmom}, we obtain the lower bounds $p^*_3 \approx -3.8252 \cdot 10^{-2} \leq p^*_{50} \approx -1.9605\cdot 10^{-4} \leq p^*=\min_{x\in[-1,1]}(1-x^2)=0$. Figure \ref{fig:regmomsos-stengle} shows neatly the 
algebraic decay of the gaps \(|p_d^*-p^*|\approx C\,d^{-2}\).

\subsection{Prestel and Delzell} 

Consider the POP of Example \ref{ex:prestel} with
$p=\tfrac{17}{4}-x^2_1-x^2_2$, $g_1(x)=x_1-\tfrac12$,
$g_2(x)=x_2-\tfrac12$,
$g_3(x)=1-x_1x_2$
and where $Q(g)$ is a non-Archimedean quadratic module. The minimum is $p^*=0$ attained at $x \in \{(1/2,2), (2,1/2)\}$.

We use the affine map \(x_i \mapsto \tfrac34\,x_i+\tfrac54\)  so that
\([-\!1,1]^2\) is sent to \([\tfrac12,2]^2\supset K\).
On \([-1,1]\) we use the arcsine reference measure
\[
d\mu(y)=\frac{1}{\pi}\frac{dy}{\sqrt{1-y^2}},
\]
for which the Chebyshev polynomials of the first kind are orthogonal. Using the  orthonormalized 1D basis
\(b_0=T_0\), \(b_k=\sqrt2\,T_k\) for \(k\ge1\), in 2D we use the tensor basis
\(b_{i,j}=b_ib_j\). Since $\sup_{x \in [-1,1]} |b_0(x)|=1$ and $\sup_{x \in [-1,1]} |b_k(x)|=\sqrt{2}$ for $k\geq 1$, for an expansion
\[
r(x) = \sum_{i+j \leq 2d} r_{ij} b_{i,j}(x)
\]
the Cauchy-Scharwz inequality gives
\[
\sup_{x \in [-1,1]^2} |r(x)| \leq \sqrt{\sum_{i+j \leq 2d} \alpha_i \alpha_j} \sqrt{\sum_{i+j \leq 2d} r^2_{ij}}
\]
where $\alpha_0=1$ and $\alpha_k=2$ for $k\ge 1$. Therefore the BM constant is
$c_{2d}^2=\sum_{i+j\le 2d}\alpha_i\alpha_j.$
Counting pairs: exactly one of \(i,j\) positive gives \(4d\) pairs with weight \(2\);
both positive gives \((2d+1)(d+1)-(4d+1)=2d^2-d\) pairs with weight \(4\); and \((0,0)\) contributes \(1\).
Hence $c_{2d}^2
=1+8d+(8d^2-4d)
=8d^2+4d+1$, i.e.
\[
c_{2d}=\sqrt{8d^2+4d+1}.
\]
The table below reports the bounds and internal quantities
for \(d=1,2,3\) obtained by solving the penalized moment-SOS hierarchy \eqref{eq:pensos}-\eqref{eq:penmom}:
\[
	\begin{tabular}{c|r r r}
		$d$ & \(\;p_d^*=v^*-c_{2d}t^*\) & \(\;v^*\) & \(\;t^*\)\\
		\hline
		1 & \(-0.40399\)           & \(1.4382\)              & \(0.51093\) \\
		2 & \(-2.1928\times10^{-3}\) & \(5.9888\times10^{-3}\) & \(1.2777\times10^{-3}\) \\
		3 & \(-1.7033\times10^{-9}\) & \(5.1568\times10^{-8}\) & \(5.7781\times10^{-9}\) \\
		\end{tabular}
\]
The certified lower bounds are negative and monotonically increasing to \(p^*=0\),  
demonstrating rapid convergence of the regularized hierarchy:
from \(-4.0\cdot10^{-1}\) at \(d=1\) to about \(-2.2\cdot10^{-3}\) at \(d=2\),
and essentially machine precision at \(d=3\).
Concurrently, the optimizer chooses a small \(v^*\) and drives the residual norm \(t^*\) to zero,
so that \(v^*-c_{2d}t^*\uparrow0\).
The Chebyshev-arcsine tensor basis together with exact product rules is numerically  stable,
and the explicit BM constant \(c_{2d}=\sqrt{8d^2+4d+1}\) provides tight control on the sup-norm via the
\(\ell_2\)-penalty on coefficients.

\section*{Acknowledgment}

This paper benefited from feedback by J. B. Lasserre, who pointed out the reference \cite{DX14} for the relationship between
the Carleman condition of the $L^2$ density of polynomials.
The author also acknowledges the help of AI in the technical developments.

\end{document}